\DeclareMathOperator{\supp}{supp}
\DeclareMathOperator{\diam}{diam}
\DeclareMathOperator{\Span}{span}
\newcommand{\mcThs}{\mcT_h^{{S}}} % small elements 
\newcommand{\mcThl}{\mcT_h^{{L}}}    % large elements
\newcommand{\Omegahl}{\Omega_h^{{L}}}
\newcommand{\Omegah}{\Omega_h}
\newcommand{\hatA}{\widehat A}
\newcommand{\hatE}{\widehat E}
\newcommand{\IQ}{\mathbb{Q}}
\newcommand{\IP}{\mathbb{P}}
\title{Extension Operators for\\Trimmed Spline Spaces} 
\author{Erik Burman,  Peter Hansbo, Mats G. Larson, Karl Larsson}
\date{\today}
\begin{document}

\maketitle

\begin{abstract}
We develop a discrete extension operator for trimmed spline spaces consisting of piecewise polynomial functions of degree $p$ with $k$ continuous derivatives. The construction is based on polynomial extension from neighboring elements together with projection back into the spline space. We prove stability and approximation results for the extension operator. Finally, we illustrate how we can use the extension operator to construct a stable cut isogeometric method for an elliptic model problem.
\end{abstract}

\section{Introduction} 
\paragraph{Contributions.}
Consider a function space of $C^k$ piecewise polynomial splines defined on a background mesh. Let $\Omega$ be a domain embedded into the background mesh without requiring that boundary matches the mesh leading to so-called cut or trimmed elements in the vicinity of the boundary. The span of every basis function that has a support that intersects $\Omega$ form the so-called active spline space associated with $\Omega$. Due to the presence of the cut elements, this basis is typically very ill-conditioned since the intersection of the domain and the support of some basis functions may be very small. We, therefore, introduce an extension operator that expresses the ill-conditioned degrees of freedom in terms of the well-conditioned degrees of freedom in the interior of the domain in a stable and accurate way.  Such a discrete extension operator has many uses, for instance,  handling of trimmed elements \cite{MaHu18} in isogeometric analysis \cite{igabook09}  in a robust manner and as an alternative to adding stabilization terms to the weak form in cut and immersed finite elements,  see \cite{Bu10, MLLR14, BCHLM15}.
%\todo{add ref trimmed iga}

To keep the presentation as simple as possible we consider $C^k$ tensor product splines but the construction is very general and allows local refinement as well as splines on triangulations. The basic idea is to split the mesh into elements with a large respectively a small intersection with the domain $\Omega$. To each small element we associate a large element in a neighborhood with a size proportional to the local mesh parameter and we then use the polynomial extension from the associated large element to the small element. This preliminary extension manufactures a function, which may be discontinuous at the faces belonging to small elements. We then project back to the spline space using an interpolation operator. This 
construction enables us to handle very general situations and to prove that the extension is stable and has optimal approximation properties in a systematic way. 
  
\paragraph{Previous Work.} Extension operators for discontinuous spaces were constructed and analyzed in \cite{JohLar13}, for continuous spaces an early approach using cell merging in structured meshes was suggested in \cite{HWX17} and then a more general agglomeration approach was introduced in \cite{BadVer18}. A framework for all nodal finite element spaces, including for instance the family of Hermite splines,  was presented in \cite{BurHanLar21a}.  Examples of applications of extension operators to cut finite element methods include \cite{BMV18b} and \cite{BHL20b}.  Another approach using extension operators, where certain terms in the finite element formulation are evaluated at an extended polynomial, was proposed for Lagrange multiplier methods in \cite{HR09} and in the context of splines using Nitsche type weak imposition of boundary conditions in \cite{BPV20}. Trimmed elements and immersed methods attract significant interest in isogeometric analysis, see for instance \cite{HVABR18, MaZe17, MaHu18, CasBon15, CasZha18}.

\paragraph{Outline.}
The paper is organized as follows: In Section 2 we introduce the spline spaces including assumptions on 
the basis functions, in Section 3 we construct an interpolation operator and establish stability and approximation properties, in Section 4 we develop the extension operator and establish its stability and approximation properties,
in Section 5 we apply the extension to a cut isogeometric method for an elliptic model problem,
and in Section 6 we present some numerical examples based on that method.
  
\section{Spline Spaces}
\begin{itemize}
\item Let $\widetilde{\mcT}_h$ be a uniform tensor product mesh on $\IR^d$ with mesh 
parameter $h \in (0,h_0]$.  Let $\widetilde{V}_{h,p,k}\subset C^{k}(\IR^d)$ 
be a spline space of piecewise tensor product polynomials of order $p$ with regularity parameter
 $0 \leq k \leq p-1$. We also let $\widetilde{V}_{h,p,-1}$ denote the space of discontinuous piecewise tensor product polynomials of order $p$.

\item Let 
\begin{equation}
\widetilde{\mcB} = \{\widetilde{\varphi_i}\}_{i \in \widetilde{I}}
\end{equation}
denote a basis in $\widetilde{V}_{h,p,k}$ and define the sub mesh consisting of elements contained 
in the support of $\widetilde{\varphi_i}$,
\begin{equation}\label{eq:Thi}
\widetilde{\mcT}_{h,i} = \{ T \in \widetilde{\mcT}_{h} :   T \subset \supp( \widetilde{\varphi}_i ) \}
\end{equation}
and the set of indices $\widetilde{I}_T \subset \widetilde{I}$ of the basis functions that contain 
element $T$,
\begin{align}\label{eq:IT}
\widetilde{I}_T = \{ i \in I : T \subset \supp(\widetilde{\varphi}_i) \} 
\end{align}
and the corresponding basis functions 
\begin{equation}
\widetilde{\mcB}_T = \{ \widetilde{\varphi}_i : i \in \widetilde{I}_T \}
\end{equation}

\item Assume that the basis $\widetilde{\mcB}$ satisfies:
\begin{description}
\item[A1.] The basis functions are locally supported. There is a constant, such that 
\begin{equation}\label{eq:locsupp-a}
| \widetilde{\mcT}_{h,i} | \lesssim 1
\end{equation}

\item[A2.] The restrictions of the basis functions $\widetilde{\mcB}_T$  whose support contain $T$ is a basis for 
$\IQ_p(T)$, the tensor product of one dimensional polynomials of degree $p$ on $T$,
\begin{equation}
\IQ_p(T) = \text{span}\{ \varphi_j : j \in \widetilde{I}_T \}
\end{equation}
and the basis $\widetilde{\mcB}_T$ is stable in the sense that there are constants such that 
\begin{equation}\label{eq:BTstab}
h^d \|\hatv_T\|^2_{\IR^{N_T}}  \sim \| v \|^2_T, \qquad v \in \IQ_p(T) 
\end{equation}
where $N_T = \dim(\IQ_p(T))$ and 
\begin{equation}\label{eq:BTexp}
v = \sum_{i \in \widetilde{I}_T} \hatv_{T,i} \varphi_i|_T
\end{equation}
is the expansion of $v\in \IQ_p(T)$ in the basis $\widetilde{\mcB}_T$.
\end{description}
Here and below the $a\lesssim b$ means $ a \leq C b$ where the constant $C$ is independent of $h$ and the intersection of the computational mesh and the domain boundary, 
but may depend on $p$ and $k$.

\begin{rem} The number of elements in $\mcT_{h,i}$ for the standard basis of B-splines with maximum regularity $k=p-1$ is $(p+1)^d$.  
\end{rem}
\begin{rem} Our constructions and analysis extend to more general spline spaces including local refinements,  
splines on triangulations,  tensor products of splines of various order and regularity.  The essential assumption is 
that we have a piecewise polynomial space and a basis that is local and the restriction of the basis functions to 
an element spans a suitable polynomial space.  We have chosen the most 
common situation of uniform tensor product splines to keep the presentation simple. 
\end{rem}

\item Given a domain $\Omega \subset \IR^d$ with Lipschitz boundary $\partial \Omega$ we define the active mesh 
\begin{equation}
\mcT_h = \{T \in \widetilde{\mcT}_h : T \cap \Omega \neq \emptyset \}
\end{equation}
and we let 
\begin{equation}
\Omega_h = \cup_{T \in \mcT_h} T
\end{equation}
We let the active basis $\mcB \subset \widetilde{\mcB}$ consist of all basis functions that contain an active element in their support,
\begin{equation}
\mcB = \{ \varphi \in \widetilde{\mcB} : \exists T \in \mcTh, T \subset \supp(\varphi) \} = \cup_{T \in \mcTh} \widetilde{\mcB}_T
\end{equation}
and we denote the index set of $\mcB$ by $I$.  The associated active spline space $V_{h,p,k}$ is defined by 
\begin{equation}
V_{h,p,k} = \text{span} (\mcB)
\end{equation}
For $T \in \mcTh$ we use the notation $ \widetilde{\mcB}_T = \mcB_T $,  and 
$\widetilde{I}_T = I_T$.
\end{itemize}

\section{Interpolation}
 In this section, we construct an interpolation operator $\pi_h:L^2(\Omega) \rightarrow V_{h,p,k}$ based 
 on standard interpolation operators for spline spaces composed with a continuous extension of the 
 function outside of the domain. We establish basic stability and approximation properties.  We also 
 show an estimate of the error in the interpolant of a discontinuous piecewise polynomial function, where the bound is in terms of a jump operator measuring the size of the jumps in derivatives across faces.  For background on spline (quasi-)interpolants we refer to \cite{deBoFi73,LeeLyc01,LycSch75,BeiBuf14}.

\subsection{Definitions}
\begin{itemize}
\item There is an extension operator  
\begin{equation}\label{eq:ext}
E: H^s(\Omega) \ni v \mapsto v^E \in H^s(\IR^d)
\end{equation}
independent of $s$,  such that 
\begin{equation}\label{eq:ext-stab}
\|v^E\|_{H^s(\IR^d)} \lesssim \|v\|_{H^s(\Omega)}
\end{equation}
see \cite{stein70}. When not needed for clarity we simply write $v^E = v$.

\item  For each element $T \in \mcT_h$ let $P_{T,p}: L^2(T) \rightarrow \mathbb{Q}_p(T)$ 
be the $L^2(T)$ projection. For $v\in H^s(\Omega)$ and each element $T\in \mcTh$ we may 
expand the projection $P_{T,p} (v^E|_T) \in \IQ_p(T)$ in the basis $\mcB_T$, 
\begin{equation}
   P_{T,p} v|_T = \sum_{i \in I_T} \widehat{v}_{T,i} \varphi_i|_T 
\end{equation}
Thus for each element $T \in \mcT_{h,i}$ we obtain a potential coefficient $\hatv_{T,i}$ 
multiplying basis function $\varphi_i$. We finally, take an average over the coefficients $\hatv_{T,i}$ obtained from each 
of the elements $T$ in the support of $\varphi_i$ to get the final coefficient for $\varphi_i$.  More 
precisely, we define 
\begin{align}\label{def:pih}
\boxed{\pi_h v = \sum_{i \in I} \langle \hatv_{T,i} \rangle_{T \in \mcT_{h,i}} \varphi_i }
\end{align}
where the average is a convex combination
\begin{equation}\label{def:pihcoeff}
\boxed{\langle \hatv_{T,i} \rangle_{T \in \mcT_{h,i}} = \sum_{T \in \mcT_{h,i}} \kappa_{T,i} \hatv_{T,i} }
\end{equation}
with arbitrary weights $\kappa_{T,i}$ such that $0\leq \kappa_{T,i} \leq 1$ and 
$\sum_{T \in \mcT_{h,i}} \kappa_{T,i} = 1$.  Note that the weights can be individually chosen 
for each basis function.
\end{itemize}

\begin{rem} Let $\mcB_T^* = \{ \varphi_{T,j}^* :  j \in I_T\}$  be the dual basis to $\mcB_T$ on $T$,  
characterized by 
\begin{equation}
 \varphi_{T,j}^*(\varphi_i|_T) = \delta_{ij}
\end{equation}
Then there is $\chi_{T,i}^* \in \IQ_p(T)$ such that $ \varphi_{T,j}^*(w) = (\chi_{T,i}^*,w)_T$ for $w \in \IQ_p(T)$ 
and we can extend the action of $\varphi_{i,T}^*$ to $L^2(T)$.  We then have 
$ \varphi_{T,j}^*(P_{T,p} v|_T) =  \varphi_{T,j}^*(v|_T)$ and therefore 
\begin{align}
  \varphi_{T,j}^*(v|_T) =  \varphi_{T,j}^* (P_{T,p} v|_T)= \sum_{i \in I_T} \widehat{v}_{T,i}\varphi_{T,j}^* ( \varphi_i|_T ) = \widehat{v}_{T,j}
\end{align}
which allow us to express the interpolant in terms of the dual basis 
\begin{align}
\pi_h v = \sum_{i \in I} \langle \varphi^*_{T,i}(v|_T) \rangle_{T \in \mcT_{h,i}} \varphi_i 
\end{align}
\end{rem}

\subsection{Properties}

The spline space $V_{h,p,k}$ is invariant under the interpolation operator. To see 
this we consider an element $T$, clearly $P_{T,p} v = v$ on $T$. Expanding $v$ in the 
spline basis $\mcB$ and restricting to $T$, 
\begin{align}
  \sum_{i \in I} \hatv_i \varphi_i|_T = v|_T = P_{T,p} v = \sum_{i \in I_T} \widehat{v}_{T,i} \varphi_i|_T 
\end{align}
Since the expansions are unique we conclude that $\widehat{v}_{T,i} = \hatv_i$, for all $T \in \mcT_{h,i}$, 
and therefore $\langle \hatv_{T,i} \rangle_{T \in \mcT_{h,i}} = \hatv_i$. It follows that $\pi_h v = v$ 
for $v \in V_{h,p,k}$. Furthermore, introducing the notation 
\begin{equation}\label{eq:omegahT}
%\mcN_{h,T} = \cup_{i \in I_T} \mcT_{h,i}, \qquad 
%\omega_{h,T} = \cup_{T \in \mcN_{h,T}} 
\omega_{h,T} = \cup_{i \in I_T} \supp(\varphi_i)
\end{equation}
we note that the restriction of the interpolant $\pi_h v$ to $T$ depends only on $v$ restricted 
to $\omega_{h,T}$, and since $\diam(\supp (\varphi_i))\lesssim h$ we have
\begin{equation}\label{eq:omegahTdiam}
\diam(\omega_{h,T} ) \lesssim h
\end{equation}
We now proceed with some basic stability and approximation results for the interpolation operator.

\begin{lem}\label{lem:pih-stab} There is a constant such that 
\begin{align}\label{eq:pihstab}
\boxed{\| \pi_h v \|_{T} \lesssim \| v \|_{\omega_{h,T}}, \qquad v \in L^2(\omega_{h,T})}
\end{align}
\end{lem}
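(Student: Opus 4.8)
The plan is to reduce the $L^2(T)$ norm of $\pi_h v$ to the Euclidean norm of its coefficient vector via the elementwise stability equivalence~\eqref{eq:BTstab}, bound each coefficient by purely local $L^2$ data, and then reassemble, letting the local finiteness encoded in assumptions~A1 and~A2 absorb all the sums into $h$-independent constants. To avoid a clash with the fixed element $T$ of the lemma, I would denote by $K$ the elements appearing in the supports $\mcT_{h,i}$.

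First I would restrict $\pi_h v$ to the fixed element $T$. Since $\varphi_i|_T = 0$ unless $i \in I_T$, only the coefficients indexed by $I_T$ survive, so $\pi_h v|_T = \sum_{i\in I_T} \langle \widehat v_{K,i}\rangle_{K\in\mcT_{h,i}}\,\varphi_i|_T$ is an expansion of an element of $\IQ_p(T)$ in the basis $\mcB_T$. Applying \eqref{eq:BTstab} on $T$ then gives $\|\pi_h v\|_T^2 \lesssim h^d\sum_{i\in I_T}\bigl|\langle\widehat v_{K,i}\rangle_{K\in\mcT_{h,i}}\bigr|^2$, converting the problem into estimating the averaged coefficients.

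Next I would control a single averaged coefficient. Because $\langle\widehat v_{K,i}\rangle_{K\in\mcT_{h,i}} = \sum_{K\in\mcT_{h,i}}\kappa_{K,i}\widehat v_{K,i}$ is a convex combination, $|\langle\widehat v_{K,i}\rangle| \le \max_{K\in\mcT_{h,i}}|\widehat v_{K,i}|$. For a fixed $K$, the numbers $\widehat v_{K,j}$, $j \in I_K$, are the coefficients of $P_{K,p}v \in \IQ_p(K)$, so applying \eqref{eq:BTstab} again on $K$ and using that the $L^2$ projection is a contraction yields $|\widehat v_{K,i}|^2 \le \|\widehat v_K\|_{\IR^{N_K}}^2 \sim h^{-d}\|P_{K,p}v\|_K^2 \le h^{-d}\|v\|_K^2$. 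Since every such $K$ satisfies $K\subset\supp(\varphi_i)\subset\omega_{h,T}$, the datum $v$ is available there without invoking the extension $E$, and this is bounded further by $h^{-d}\|v\|_{\omega_{h,T}}^2$.

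Finally I would assemble the pieces: combining the above, $\|\pi_h v\|_T^2 \lesssim h^d\sum_{i\in I_T} h^{-d}\|v\|_{\omega_{h,T}}^2 = |I_T|\,\|v\|_{\omega_{h,T}}^2$, and since $|I_T| = N_T = \dim\IQ_p(T) = (p+1)^d$ is independent of $h$, the claim follows. I do not expect a genuine difficulty here; the only point requiring care is the bookkeeping that keeps every implied constant $h$-independent. This rests on using \eqref{eq:BTstab} in both directions (once to pass from the norm on $T$ to its coefficients, once to bound a coefficient by a projection norm on $K$), on the contraction property of $P_{K,p}$, and on the local finiteness $|\mcT_{h,i}|\lesssim 1$ from~A1 together with the fixed value of $N_T$, so that the factors $h^d$ and $h^{-d}$ cancel exactly.
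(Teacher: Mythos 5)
Your proposal is correct and follows essentially the same route as the paper's proof: restrict to $T$, apply the coefficient--norm equivalence \eqref{eq:BTstab} on $T$, bound the convex-combination coefficient by elementwise coefficients, apply \eqref{eq:BTstab} again on the neighboring elements together with the $L^2$-projection contraction, and absorb the finite sums using the locality assumptions. The only cosmetic difference is that you bound the average by a maximum while the paper bounds it by a sum of squares, and you correctly note the extension $v^E$ is not needed since all elements involved lie in $\omega_{h,T}$.
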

\begin{proof} Starting from the definition (\ref{def:pih}) of the interpolant and the expression for the 
coefficients (\ref{def:pihcoeff}) we obtain 
\begin{align}
\| \pi_h v \|^2_{T} &\lesssim \sum_{i \in I_T} | \langle \hatv_{T,i} \rangle_{T \in \mcT_{h,i}}|^2 h^d
 \lesssim \sum_{i \in I_T}  \sum_{T' \in \mcT_{h,i}} \hatv_{T',i}^2  h^d
\\
&\qquad
\lesssim  \sum_{i \in I_T}  \sum_{T' \in \mcT_{h,i}} \| P_{T',p} (v^E|_T') \|^2_{T'} 
\lesssim  \sum_{i \in I_T}  \sum_{T' \in \mcT_{h,i}} \| v^E \|^2_{T'}
\lesssim \| v^E \|^2_{\omega_{h,T}}
\end{align}
where we used the equivalence (\ref{eq:BTstab}). This completes the proof.
\end{proof}

\begin{lem}\label{lem:pih-approx} There is a constant such that for all $v \in H^r(\Omega)$
\begin{align}\label{eq:pihapprox}
\boxed{\| v - \pi_h v \|_{H^m(\Omegah)} \lesssim h^{s - m} \| v \|_{H^{s}(\Omega)},
\qquad  0\leq m \leq k}
\end{align}
where $s = \min(r,p+1)$.
\end{lem}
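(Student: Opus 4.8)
The plan is to combine the invariance $\pi_h q = q$ for polynomials with a local Bramble--Hilbert estimate, upgrade the $L^2$ stability of Lemma~\ref{lem:pih-stab} to the $H^m$ norm by an element-wise inverse inequality, and then sum the local contributions using the finite overlap of the neighborhoods $\omega_{h,T}$. Throughout I identify $v$ with $v^E$ on $\Omegah$, so the quantity to bound is $\|v^E - \pi_h v\|_{H^m(\Omegah)}$. Fix $T \in \mcTh$ and a polynomial $q \in \IP_p \subset \IQ_p \subset V_{h,p,k}$. Since $\pi_h q = q$, linearity gives $\pi_h v - q = \pi_h(v-q)$, and the triangle inequality yields
\begin{equation}
\| v^E - \pi_h v \|_{H^m(T)} \le \| v^E - q \|_{H^m(T)} + \| \pi_h(v-q) \|_{H^m(T)} .
\end{equation}
The restriction of the spline $\pi_h(v-q)$ to $T$ lies in $\IQ_p(T)$, so a polynomial inverse inequality on an element of size $h$ combined with the stability of Lemma~\ref{lem:pih-stab} (applied to $v-q$) gives
\begin{equation}
\| \pi_h(v-q) \|_{H^m(T)} \lesssim h^{-m} \| \pi_h(v-q) \|_{T} \lesssim h^{-m} \| v^E - q \|_{\omega_{h,T}} .
\end{equation}

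Next I would take $q$ to be the averaged Taylor polynomial of $v^E$ of degree $p$ over $\omega_{h,T}$. By \eqref{eq:omegahTdiam} the set $\omega_{h,T}$ consists of $O(1)$ elements and has diameter $\lesssim h$; for tensor product B-splines it is in fact a box, hence star-shaped with respect to a ball of radius $\sim h$, so the Bramble--Hilbert lemma applies with a constant depending only on $p$, $k$ and $d$. With $s = \min(r,p+1)$ it yields both
\begin{equation}
\| v^E - q \|_{H^m(\omega_{h,T})} \lesssim h^{s-m} | v^E |_{H^s(\omega_{h,T})}
\qquad\text{and}\qquad
\| v^E - q \|_{\omega_{h,T}} \lesssim h^{s} | v^E |_{H^s(\omega_{h,T})} .
\end{equation}
Inserting these into the two terms above, the extra power $h^{s}$ from the $L^2$ approximation compensates the factor $h^{-m}$ from the inverse inequality, and both terms are bounded by $h^{s-m}|v^E|_{H^s(\omega_{h,T})}$, so that $\|v^E - \pi_h v\|_{H^m(T)} \lesssim h^{s-m}|v^E|_{H^s(\omega_{h,T})}$.

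Finally I would sum over $T \in \mcTh$. Since $\pi_h v \in V_{h,p,k} \subset C^k$ and $m \le k$, the interpolant is $H^m$-conforming on $\Omegah$, so the global norm decomposes as $\|w\|_{H^m(\Omegah)}^2 = \sum_T \|w\|_{H^m(T)}^2$ for $w = v^E - \pi_h v$. Assumption A1 forces each $\omega_{h,T}$ to contain $O(1)$ elements and each element to lie in $O(1)$ of the sets $\omega_{h,T}$, so these neighborhoods have finite overlap and
\begin{equation}
\| v^E - \pi_h v \|^2_{H^m(\Omegah)} = \sum_{T \in \mcTh} \| v^E - \pi_h v \|^2_{H^m(T)} \lesssim h^{2(s-m)} \sum_{T \in \mcTh} | v^E |^2_{H^s(\omega_{h,T})} \lesssim h^{2(s-m)} \| v^E \|^2_{H^s(\IR^d)} .
\end{equation}
The extension stability \eqref{eq:ext-stab} bounds $\|v^E\|_{H^s(\IR^d)} \lesssim \|v\|_{H^s(\Omega)}$, and taking square roots completes the argument.

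The main obstacle is the second term of the first display: Lemma~\ref{lem:pih-stab} controls $\pi_h$ only in $L^2$, so the passage to $H^m$ must route through the inverse inequality at the cost of $h^{-m}$. Obtaining the optimal rate therefore hinges on matching this loss against the $L^2$ (rather than $H^m$) approximation gain $h^{s}$, and on the fact that for $m \le k$ the spline is $H^m$-conforming so that no interelement jump terms appear in the global norm. A secondary point worth checking is that $\pi_h$ reproduces the global polynomials $\IP_p$ through the extension, i.e.\ that $q^E$ may be taken equal to $q$; this is immediate from the invariance $\pi_h v = v$ on $V_{h,p,k}$ established before the lemma.
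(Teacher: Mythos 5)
Your proposal is correct and follows essentially the same route as the paper's proof: triangle inequality with a local polynomial, an inverse inequality to trade the $H^m$ norm of the spline error for an $L^2$ norm at cost $h^{-m}$, the $L^2$ stability of $\pi_h$ from Lemma~\ref{lem:pih-stab} combined with polynomial invariance, a Bramble--Hilbert choice of the polynomial on $\omega_{h,T}$, and finally summation over elements together with the extension stability \eqref{eq:ext-stab}. Your extra remarks on star-shapedness of $\omega_{h,T}$, finite overlap, and $H^m$-conformity of the spline only make explicit details the paper leaves implicit.
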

\begin{proof} We have the invariance
\begin{equation}
(\pi_h w)|_T = w , \qquad w \in \IP_p(\omega_T)
\end{equation}
since the spline space is invariant under the action of $\pi_h$ 
and polynomials can be represented exactly in the spline space.
Therefore, for any $w \in \mathbb{P}_p(\omega_{h,T})$ we have,
\begin{align}
\| v - \pi_h v \|_{H^m(T)} &\leq \| v - w \|_{H^m(T)} + \| w - \pi_h v \|_{H^m(T)}
\\
&\lesssim \| v - w \|_{H^m(\omega_{h,T})} + h^{-m} \| w - \pi_h v \|_T
\\
&\lesssim \| v - w \|_{H^m(\omega_{h,T})} + h^{-m} \| \pi_h (w -  v) \|_T
\\
&\lesssim \| v - w \|_{H^m(\omega_{h,T})} + h^{-m} \| w -  v \|_{\omega_{h,T}}
\\
&\lesssim h^{s - m}\| v \|_{H^{s}(\omega_{h,T})}
\end{align}
where we first used the stability (\ref{eq:pihstab}) and then choose $w$ according to the 
Bramble-Hilbert lemma, see \cite[Lemma 4.3.8]{BreSco}.  Summing over the elements and using the 
stability (\ref{eq:ext-stab}) of the extension operator completes the proof.
\end{proof}

\begin{lem}\label{lem:pih-approx-disc}
There is a constant such that  
\begin{align}\label{eq:pih-approx-disc}
\boxed{\| v - \pi_h v \|_{\Omega_h} \lesssim \| v \|_{j_h} , \qquad v \in V_{h,p,-1}}
\end{align}
with
\begin{equation}\label{eq:jh}
\| v \|^2_{j_h} = \sum_{l=0}^p h^{2l + 1} \| [ \nabla^l v ] \|^2_{\mcF_h}
\end{equation}
where $\mcF_{h}$ is the set of interior faces in $\mcT_{h}$.
\end{lem}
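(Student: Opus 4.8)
The plan is to exploit that for a discontinuous piecewise polynomial $v \in V_{h,p,-1}$ the local $L^2$-projection $P_{T,p}$ acts as the identity, so on each element the coefficient $\hatv_{T,i}$ is simply the coefficient of $v|_T \in \IQ_p(T)$ in the basis $\mcB_T$, and the interpolant differs from $v$ only through the disagreement of these coefficients computed on the various elements sharing the support of $\varphi_i$. First I would fix an element $T_0$, expand $(v-\pi_h v)|_{T_0} = \sum_{i \in I_{T_0}} (\hatv_{T_0,i} - \langle \hatv_{T,i}\rangle_{T\in\mcT_{h,i}})\varphi_i|_{T_0}$, and invoke the basis stability (\ref{eq:BTstab}) to obtain $\|v-\pi_h v\|_{T_0}^2 \lesssim h^d \sum_{i\in I_{T_0}} |\hatv_{T_0,i} - \langle \hatv_{T,i}\rangle|^2$. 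Since the average is a convex combination with $\sum_T \kappa_{T,i}=1$, I rewrite $\hatv_{T_0,i} - \langle \hatv_{T,i}\rangle = \sum_{T} \kappa_{T,i}(\hatv_{T_0,i} - \hatv_{T,i})$ and apply Jensen's inequality, reducing everything to controlling the pairwise coefficient differences $|\hatv_{T_0,i} - \hatv_{T,i}|$ for $T,T_0 \in \mcT_{h,i}$.

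Next I would reduce these pairwise differences to face-neighbours. Because all elements of $\mcT_{h,i}$ lie in $\supp(\varphi_i)$, which has diameter $\lesssim h$ and contains $\lesssim 1$ elements by A1, I connect $T_0$ to $T$ by a chain of face-neighbours of uniformly bounded length and telescope $\hatv_{T_0,i} - \hatv_{T,i}$ along it. It then suffices to estimate $|\hatv_{T,i} - \hatv_{T',i}|$ for two elements $T,T'$ sharing an interior face $F$. The key observation is polynomial invariance: the polynomial $q$ obtained by extending $v|_{T'}$ from $T'$ to all of $\IR^d$ has, by the same spline-space invariance used to prove $\pi_h$ reproduces polynomials, the same coefficient on $T$ and on $T'$, namely $\hatv_{T',i}$. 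Hence $\hatv_{T,i} - \hatv_{T',i}$ equals the coefficient on $T$ of $w := v|_T - q|_T \in \IQ_p(T)$, and the bound $|\widehat{w}_{T,i}| \lesssim h^{-d/2}\|w\|_T$, which follows from (\ref{eq:BTstab}), gives $|\hatv_{T,i}-\hatv_{T',i}| \lesssim h^{-d/2}\|w\|_T$.

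The heart of the matter, and the step I expect to be the main obstacle, is the polynomial estimate controlling $\|w\|_T$ by the jumps. By construction $w$ is the difference of the two polynomial pieces of $v$ across $F$, so its derivatives on $F$ are exactly the jumps, $\nabla^l w|_F = [\nabla^l v]$ for $0 \leq l \leq p$ (up to the orientation of the jump). I would establish, for all $w \in \IQ_p(T)$, the estimate
\[
\|w\|_T^2 \lesssim \sum_{l=0}^p h^{2l+1}\|\nabla^l w\|_F^2 .
\]
Scaling to the reference element $\hat T$ turns each term $h^{2l+1}\|\nabla^l w\|_F^2$ into $\|\nabla^l \hat w\|_{\hat F}^2$ and $\|w\|_T^2$ into $\|\hat w\|_{\hat T}^2$, up to the common factor $h^d$, reducing the claim to $\|\hat w\|_{\hat T}^2 \lesssim \sum_{l=0}^p \|\nabla^l \hat w\|_{\hat F}^2$ for $\hat w \in \IQ_p(\hat T)$. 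This is an equivalence of norms on the finite-dimensional space $\IQ_p(\hat T)$, so it reduces to injectivity: if $\nabla^l \hat w$ vanishes on $\hat F$ for all $l \leq p$, then $\hat w \equiv 0$. Writing $\hat F = \{x_1=0\}$ and $\hat w(x_1,x') = \sum_{j=0}^p x_1^j a_j(x')$, the vanishing of the normal derivatives $\partial_{x_1}^j \hat w|_{\hat F} = j!\,a_j$ for $j=0,\dots,p$ forces every $a_j$, and hence $\hat w$, to vanish.

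Combining the pieces yields $h^d |\hatv_{T,i}-\hatv_{T',i}|^2 \lesssim \sum_{l=0}^p h^{2l+1}\|[\nabla^l v]\|_F^2$. Finally I would insert this into the telescoped sum, sum over $i \in I_{T_0}$ and over all $T_0 \in \mcT_h$, and use A1 together with the finite overlap of the supports $\omega_{h,T_0}$ to absorb the bounded number of repetitions of each face, so that every interior face of $\mcT_h$ is counted a bounded number of times. This gives $\|v-\pi_h v\|_{\Omega_h}^2 \lesssim \sum_{l=0}^p h^{2l+1}\|[\nabla^l v]\|_{\mcF_h}^2 = \|v\|_{j_h}^2$, completing the argument.
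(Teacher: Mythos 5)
Your proof is correct, but it takes a genuinely different route from the paper. The paper's proof is short and function-based: it uses the polynomial invariance of $\pi_h$ together with the $L^2$-stability of Lemma \ref{lem:pih-stab} to get $\|(I-\pi_h)v\|_T \lesssim \|v-w\|_{\omega_{h,T}}$ for $w$ the canonical extension of $v|_T$, and then simply cites the standard patchwise face-stabilization estimate $\|v-w\|^2_{\omega_{h,T}} \lesssim \sum_{l=0}^p h^{2l+1}\|[\nabla^l v]\|^2_{\mcF_{h,i}}$ from the ghost-penalty literature. You instead work at the coefficient level: basis stability (\ref{eq:BTstab}) reduces the error on an element to discrepancies of the local coefficients, Jensen and a telescoping chain of face-neighbours reduce these to differences across a single face, the identification of that difference with a coefficient of $v|_T - (v|_{T'})^e$ uses exactly the uniqueness-of-expansion/polynomial-reproduction argument the paper uses for invariance of $\pi_h$, and you then prove the single-face estimate $\|w\|_T^2 \lesssim \sum_l h^{2l+1}\|\nabla^l w\|_F^2$ from scratch by scaling and a finite-dimensional injectivity argument --- i.e.\ you re-derive the very estimate the paper outsources to \cite{HanLar17,MasLar14}. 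What your route buys is self-containedness and transparency about where the structural hypotheses enter: the chaining step makes explicit that the elements in $\mcT_{h,i}$ must be face-connected (true for the tensor-product B-spline supports considered, and implicit in the paper's cited patch estimate as well), and the coefficient identification makes explicit the use of global $\IQ_p$-reproduction by the spline space. What the paper's route buys is brevity and reuse of already-proved lemmas, at the cost of an external citation for the key jump estimate. Both arguments share the same mild implicit assumption that only interior faces of the active mesh are needed (i.e.\ that the relevant patches of active elements are face-connected), so this is not a defect particular to your version.
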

\begin{proof} 
Proceeding in a similar manner as in the proof of Lemma \ref{lem:pih-approx} we note that for 
any $w \in \mathbb{P}_p(\omega_T)$ we have, using the $L^2$ stability of $\pi_h$,
\begin{align}
\| (I - \pi_h )  v \|_T &= \| (I - \pi_h )  (v - w) \|_T \lesssim \| v - w\|_{\omega_{h,T}}
\end{align}
Finally, taking $w\in \mathbb{P}_p(\omega_T)$ such that $w = v$ on $T$ and using standard 
estimates, see \cite{HanLar17} and \cite{MasLar14},  for face stabilisation of higher order elements we get 
\begin{align}
\| v - w \|^2_{\omega_{h,T}} \lesssim \sum_{l=0}^p h^{2l+1} \| [ \nabla^l v ] \|^2_{\mcF_{h,i}}
\end{align} 
where $\mcF_{h,i}$ is the set of interior faces in $\mcT_{h,i}$.
\end{proof}

\begin{rem} Note that the $h$-scaling in $j_h$ is chosen such that we have the inverse inequality
\begin{equation}
\| v \|_{j_h} \lesssim \| v \|_{\mcT_h} , \qquad v \in V_{h,p,-1} \cap C(\Omega_h)
\end{equation} 
\end{rem}

\begin{rem} The estimate (\ref{eq:pih-approx-disc}) may be 
 viewed as a generalization of estimates for Oswald interpolation introduced in \cite{Osw93},  from piecewise linear spaces to spline spaces. This type of operator is also used for the analysis of interior penalty stabilised finite element methods \cite{DD76,BH04}. In the case of high order finite element spaces an $hp$-analysis was considered in \cite{BE07}. In this context the Lemma \ref{lem:pih-approx-disc} is instrumental for the analysis of the skeleton based stabilised methods proposed in \cite{HVABR18}.
\end{rem}

\section{Extension}
\subsection{Definitions}
\begin{itemize} 
\item To define the extension operator we partition $\mcT_h$ into the set of elements $\mcThl$ 
that have a large intersection with $\Omega$, in the sense that 
\begin{equation}\label{eq:Largeelem}
\gamma h^d \leq |T \cap \Omega | 
\end{equation}
for a parameter $\gamma\geq 0$, and the set of elements $\mcThs =\mcT_{h} \setminus \mcThl $ 
with a small intersection. We thus have 
\begin{equation}
\mcTh = \mcThl \cup \mcThs
\end{equation}
We also define 
\begin{equation}
\Omegahl = \cup_{T \in \mcThl} T \subset \Omegah
\end{equation}

\item Let $S_h:\mcThs \rightarrow \mcThl$ be a mapping that associates a 
large element to a small element such that 
\begin{align}\label{eq:shassump}
\diam (S_h(T) \cup T) \lesssim h 
\end{align}
According to Lemma 2.4 in \cite{BurHanLar21a} there is such a mapping for domains with Lipschitz boundary when the mesh is sufficiently fine.

\item We let 
\begin{equation}
\mcB^L = \cup_{T \in \mcThl} \mcB_T \subset \mcB, \qquad \mcB^S = \mcB \setminus \mcB^L
\end{equation}
be the set of active basis functions that contain a large element in their support and the set of 
remaining active basis functions.  We then have the (interior) direct sum
\begin{align}
\boxed{V_{h,p,k} = V_{h,p,k}^L \oplus V_{h,p,k}^S}
\end{align}
where 
\begin{equation}
 V_{h,p,k}^L = \Span(\mcB^L), \qquad V_{h,p,k}^S = \Span(\mcB^S)
\end{equation}
The index sets of $\mcB^L$ and $\mcB^S$ are denoted by $I^L$ and $I^S$, respectively.

\item We define the extension operator 
\begin{equation}
B_h: V^L_{h,p,k}  \rightarrow V_{h,p,-1}
\end{equation} 
in such a way that
\begin{align} \label{def:Bh}
(B_h v)|_T = 
\begin{cases}
\left(v|_{S_h(T)}\right)^e|_T, & T \in \mcThs 
\\
v|_T, & T \in \mcThl 
\end{cases}
\end{align}
where $v^e$ denotes the canonical extension of a polynomial $v$ on $T$ to a polynomial on $\IR^d$.

\item We define the extension operator 
\begin{equation}\label{def:Eh}
\boxed{E_h: V^L_{h,p,k}  \ni v \mapsto \pi_h B_h v  \in V_{h,p,k}^E \subset V_{h,p,k} }
\end{equation}
where the extended finite element space  $V_{h,p,k}^E$ is defined by 
\begin{equation}
\boxed{ V_{h,p,k}^E = E_h V^L_{h,p,k} }
\end{equation}

\item Extending the mapping $S_h$ from $\mcThs$ to $\mcTh$ by setting $S_h(T) = T$, for 
$T \in \mcThl$, we note that the set valued mapping $S_h^{-1}: \mcThl \rightarrow \mcTh$ induce a partition 
$\mcM_h$ of $\mcT_h$ into macro elements 
\begin{equation}\label{eq:macro}
M_T = \cup_{T' \in S_h^{-1}(T)} T', \qquad T \in \mcThl
\end{equation}
that thanks to the property (\ref{eq:shassump}) satisfy 
\begin{equation}\label{eq:macrodiam}
\diam(M_T) \lesssim h
\end{equation}
 The macro elements $M_T$ are invariant under $S_h$ in the sense that if $T' \subset M_T$ then $S_h(T') \subset M_T$. Let $V_{h,p,-1}^M$ be the space of discontinuous tensor product polynomials polynomials of degree $p$ on $\mcM_h$. Then it follows from the invariance of $\mcM_h$ under $S_h$ that $V_{h,p,-1}^M$ is invariant under $B_h$, 
\begin{equation}\label{eq:Bh-inv}
\boxed{w = B_h w, \qquad w \in V^M_{h,p,-1}}
\end{equation}

\item If we choose the weights $\kappa_{T,i}$ in the average (\ref{def:pihcoeff}) such that 
\begin{equation}\label{eq:weights-rest}
\kappa_{T',i} = 0, \qquad T' \in \mcThs
\end{equation}
for all basis functions $\varphi_{i} \in \mcB_T^L$.  Then the extension operator takes the form
\begin{equation}
\boxed{ E_h: V_{h,p,k}^L \ni  v^L \mapsto v^L \oplus (E_h v^L)^S \in V_{h,p,k}^E \subset V_{h,p,k} }
\end{equation}
where $v^L = \sum_{i\in I^L} \widehat v_i \varphi_i \in V_{h,p,k}^L$ denotes the expansion of $v\in V_{h,p,k}$ in the large basis functions and  $v^S = \sum_{i\in I^S} \widehat v_i \varphi_i \in V_{h,p,k}^S$ denotes the expansion of $v$ in the small basis functions.
This means that the component $(E_h v)^L$ of $E_h v$ in $V^L_{h,p,k}$ is identical to $v^L$ 
and the extension operator determines a suitable component in $V^S_{h,p,k}$. In particular, we note that with weights satisfying (\ref{eq:weights-rest}) the extension operator does not change 
$v$ on $\Omegahl$,
\begin{equation}\label{eq:invariant-large}
\boxed{(E_h v) |_{\Omegahl} = v|_{\Omegahl}}
\end{equation}
\end{itemize}

\subsection{Properties}

\begin{lem}[Preservation of Polynomials] If $v \in \IQ_p(\Omega)$ then 
\begin{equation}
\boxed{ E_h \bigl( v |_{\Omegahl} \bigr) = v }
\end{equation}
\begin{proof} It follows from the definition of $B_h$ that $B_h \bigl(v|_{\Omegahl}\bigr) = v$ and $\pi_h v = v$.
\end{proof}

\end{lem}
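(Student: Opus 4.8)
The plan is to unwind the definition $E_h = \pi_h \circ B_h$ from (\ref{def:Eh}) and reduce the claim to two independent facts: first, that $B_h$ reconstructs the global polynomial $v$ from its values on the large elements, $B_h(v|_{\Omegahl}) = v$; and second, that $\pi_h$ reproduces that polynomial, $\pi_h v = v$. Chaining these gives $E_h(v|_{\Omegahl}) = \pi_h B_h(v|_{\Omegahl}) = \pi_h v = v$ at once. Note that $B_h$ only ever reads $v$ on large elements (directly on $T \in \mcThl$, and through $S_h(T) \in \mcThl$ on $T \in \mcThs$), so the expression $B_h(v|_{\Omegahl})$ is well posed even though $v|_{\Omegahl}$ carries only large-element data.

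For the first fact I would argue element by element from the definition (\ref{def:Bh}). On a large element $T \in \mcThl$ the second branch gives $(B_h(v|_{\Omegahl}))|_T = v|_T$ immediately. On a small element $T \in \mcThs$ the first branch gives $(v|_{S_h(T)})^e|_T$, the canonical polynomial extension of the restriction of $v$ to the associated large element $S_h(T)$. The one thing to verify is that this extension agrees with $v$ on $T$: since $v \in \IQ_p(\Omega)$ is the restriction of a single global tensor-product polynomial of degree $p$, its restriction to the full-dimensional element $S_h(T)$ determines it uniquely, and the canonical extension is by definition the unique global $\IQ_p$ polynomial agreeing with $v|_{S_h(T)}$, namely $v$ itself. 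Hence $(v|_{S_h(T)})^e = v$ and its restriction to $T$ equals $v|_T$. Collecting both cases yields $B_h(v|_{\Omegahl}) = v$ on all of $\Omegah$.

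For the second fact I would invoke the invariance of the spline space under $\pi_h$ established in Section 3: since a global polynomial of degree at most $p$ is smooth and elementwise in $\IQ_p$, we have $\IQ_p(\Omega) \subset V_{h,p,k}$, so $v \in V_{h,p,k}$ and therefore $\pi_h v = v$; concretely, $P_{T,p}(v|_T) = v|_T$ on every active element, so all candidate coefficients $\widehat{v}_{T,i}$ coincide with the global coefficient $\widehat{v}_i$ and each convex average in (\ref{def:pihcoeff}) returns $\widehat{v}_i$. The only genuinely substantive point in the whole argument is the small-element case of the first fact, and it is a statement about polynomials rather than splines: the construction is precisely engineered so that the preliminary, possibly discontinuous, extension $B_h$ reproduces polynomials exactly, after which the polynomial-preserving projection $\pi_h$ leaves the result unchanged. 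The remaining care is bookkeeping: one uses that $S_h(T)$ is a genuine large element so that $v|_{S_h(T)}$ is a full $\IQ_p$ polynomial, and, in evaluating $\pi_h$, that the extension $v^E$ of a global polynomial may be taken to be $v$ itself so that cut elements contribute nothing spurious.
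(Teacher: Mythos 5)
Your proof is correct and follows the same route as the paper: the paper's proof is precisely the two facts $B_h\bigl(v|_{\Omegahl}\bigr) = v$ and $\pi_h v = v$, stated without elaboration, and you simply fill in the elementwise verification for $B_h$ (the canonical extension from $S_h(T)$ reproducing the global polynomial) and the spline-space invariance argument for $\pi_h$. No gaps; the extra detail you supply is exactly what the paper leaves implicit.
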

Before proving a stability result for the extension operator we show the following technical lemma which 
provides a bound for the right hand side of (\ref{eq:pih-approx-disc}) for a function of the form 
$B_h v \in V_{h,p, -1}$ with $v \in V_{h,p,k}$.
\begin{lem}\label{lem:jumpest}  There is a constant such that 
\begin{align}
\boxed{ \| B_h v \|_{j_h} \lesssim h^m \| \nabla^m v \|_{\Omega},\qquad 0\leq m \leq k, \qquad v \in V_{h,p,k} }
\end{align}
\end{lem}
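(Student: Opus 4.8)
The plan is to exploit the macro-element structure of $B_h$. First I would observe that, by the definition \eqref{def:Bh} together with the $S_h$-invariance of the partition $\mcM_h$ used to derive \eqref{eq:Bh-inv}, the image $B_h v$ reduces on each macro element $M_T \in \mcM_h$ to the single polynomial $q_T := (v|_T)^e \in \IQ_p$ obtained by extending the restriction of $v$ to the large element $T$. Consequently $B_h v$ lies in the macro-polynomial space $V_{h,p,-1}^M$, so all of its derivative jumps vanish across faces interior to a macro element, and the sum defining $\|B_h v\|_{j_h}$ in \eqref{eq:jh} collapses to the faces $F$ that separate two distinct macro elements $M_{T_1}$ and $M_{T_2}$, where $[\nabla^l B_h v]|_F = \nabla^l (q_{T_1} - q_{T_2})|_F$.

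Next I would estimate a single such face contribution. Since $B_h$ reproduces global polynomials by \eqref{eq:Bh-inv}, for any $\pi \in \IP_{m-1}$ (note $\IP_{m-1} \subset \IQ_p$ as $m \leq k \leq p-1$) we have $[\nabla^l B_h v]|_F = \nabla^l\bigl((q_{T_1}-\pi) - (q_{T_2}-\pi)\bigr)|_F$, and crucially $q_{T_i}-\pi = v - \pi$ on the large element $T_i$. Because $q_{T_i}-\pi$ is a global polynomial, a scaled trace inequality on a ball $B \supset T_1 \cup T_2$ of radius $\sim h$ (legitimate since $\diam(M_{T_i}) \lesssim h$ by \eqref{eq:macrodiam}), followed by an inverse estimate and the equivalence of polynomial norms over $B$ and its subset $T_i$, gives $\|\nabla^l(q_{T_i}-\pi)\|_F \lesssim h^{-1/2-l}\|v-\pi\|_{T_i}$. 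Multiplying by $h^{2l+1}$ cancels the powers of $h$, so that $h^{2l+1}\|[\nabla^l B_h v]\|_F^2 \lesssim \|v-\pi\|_{T_1}^2 + \|v-\pi\|_{T_2}^2$, with the $h^{2m}$ scaling to be recovered in the final step.

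To produce the right-hand side $\|\nabla^m v\|_\Omega$ I would localise everything to the large elements, which is essential: the naive global bound $\|\nabla^m v\|_{\Omegah} \lesssim \|\nabla^m v\|_\Omega$ is \emph{false}, since a basis function may be supported almost entirely outside $\Omega$. Here I use that $T_i$ is large, $|T_i \cap \Omega| \geq \gamma h^d$ by \eqref{eq:Largeelem}; since $v - \pi$ is a polynomial on $T_i$, a fat-intersection (Remez-type) norm equivalence yields $\|v-\pi\|_{T_i} \lesssim \|v-\pi\|_{T_i \cap \Omega}$. Choosing the single polynomial $\pi$ to be the averaged Taylor polynomial of $v$ of degree $m-1$ on a star-shaped neighbourhood $U_F \subset \Omega$ of diameter $\sim h$ containing $T_1 \cap \Omega$ and $T_2 \cap \Omega$, the Bramble--Hilbert lemma \cite[Lemma 4.3.8]{BreSco} gives $\|v-\pi\|_{T_i \cap \Omega} \leq \|v-\pi\|_{U_F} \lesssim h^m \|\nabla^m v\|_{U_F}$, valid for $m \leq k$ since then $v \in H^m$. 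Summing over the finitely many $l \in \{0,\dots,p\}$ and over the macro faces, and invoking the bounded overlap of the neighbourhoods $U_F$, then yields $\|B_h v\|_{j_h}^2 \lesssim h^{2m}\|\nabla^m v\|_\Omega^2$.

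The main obstacle is this last paragraph, namely transferring the estimate from the extended region to $\Omega$. Concretely, one must justify (i) the fat-intersection inequality on large elements and (ii) the existence of the star-shaped subdomains $U_F \subset \Omega$ on which Bramble--Hilbert applies with an $h$-uniform constant; both rely on $\partial\Omega$ being Lipschitz and the mesh being sufficiently fine, in the same spirit as the construction of $S_h$ via \cite[Lemma 2.4]{BurHanLar21a}. By contrast, the reduction to the macro skeleton and the polynomial trace/inverse manipulation are routine once the key observation that $B_h v$ is macro-wise a single polynomial is in place.
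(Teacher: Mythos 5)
Your face-by-face estimate follows essentially the same mechanics as the paper's: subtract a globally smooth polynomial so the jump is unchanged, pass from the face to nearby regions by discrete trace and inverse inequalities, and exploit that near the face $B_h v$ is the canonical extension of the polynomial $v|_{T_i}$ from an associated large element (the paper phrases this via $(B_h v - w)|_{T'} = ((v-w)|_{S_h(T')})^e$ and the stability \eqref{eq:polextstab}). Your preliminary observation that $B_h v$ is a single polynomial on each macro element, so only faces between distinct macro elements contribute to $\| B_h v\|_{j_h}$, is correct but not needed; the paper simply estimates every interior face. Where you genuinely diverge is the final localization to $\Omega$. The paper chooses $w$ as the $L^2(\omega_\delta)$-projection of the Stein extension $v^E$ on a ball $\omega_\delta \supset T_1\cup S_h(T_1)\cup T_2\cup S_h(T_2)$ of radius $\lesssim h$, uses the approximation estimate on the ball, and after summation invokes \eqref{eq:ext-stab}; the whole point of $v^E$ is that the ball need not lie in $\Omega$, so no geometric property of $\Omega$ at scale $h$ is needed beyond what already enters through $S_h$. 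You instead stay inside $\Omega$: a Remez-type fat-intersection norm equivalence on the large elements (legitimate, with a $\gamma$-dependent constant, since $v-\pi$ is a polynomial on $T_i$ and $|T_i\cap\Omega|\ge \gamma h^d$ by \eqref{eq:Largeelem}) followed by Bramble--Hilbert on a star-shaped set $U_F\subset\Omega$. What your route buys is that the right-hand side comes out directly as the seminorm $h^m\|\nabla^m v\|_\Omega$ and the Stein extension is avoided; and in fairness, a fat-intersection step of the kind you make explicit is also implicitly present in the paper's inequality $\|v-w\|_{S_h(\mcT_h(F))} \lesssim \|v^E - w\|_{\omega_\delta}$.

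The one substantive gap is your ingredient (ii): the existence, for every relevant face $F$, of a set $U_F\subset\Omega$ of diameter $\lesssim h$, star-shaped with respect to a ball of radius $\sim h$ with uniformly bounded chunkiness, containing both $T_1\cap\Omega$ and $T_2\cap\Omega$. This is exactly where $h$-uniformity of the Bramble--Hilbert constant is decided, and it does not follow automatically from $\partial\Omega$ Lipschitz: at scale $h$ the set $\Omega\cap B(x,Ch)$ may be disconnected or badly shaped, so $U_F$ must be constructed (e.g., a boundary-graph-aligned cylinder of radius $\sim h$ extended inward a depth $\sim (1+L)h$, for $h\le h_0$ small relative to the Lipschitz character). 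That is a separate geometric lemma of the same flavor as, but not a consequence of, Lemma 2.4 in \cite{BurHanLar21a}, and as written you assert it rather than prove it. Either supply that lemma (with constants depending only on the Lipschitz character of $\partial\Omega$, $\gamma$, $p$, $d$), or adopt the paper's device of working with $v^E$ on balls, which is designed precisely to bypass this point.
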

\begin{proof} Consider a face $F$ shared by elements $\mcT_h(F) = \{T_1,T_2\}$. Let 
$\omega_\delta$ be a ball of radius $\delta$ such that 
\begin{align}
T_1 \cup S_h (T_1) \cup T_2 \cup S_h(T_2) \subset \omega_\delta
\end{align}
Since $T_1$ and $T_2$ share a face and $S_h$ satisfies (\ref{eq:shassump}) we conclude that there is such a ball with radius $\delta \lesssim h$. 
For $w \in \IP_p (\omega_\delta)$ we then have the estimates 
\begin{align}
&h^{2l + 1} \| [ \nabla^l B_h v ] \|^2_{F} = h^{2l + 1} \| [ \nabla^l (B_h v - w )] \|^2_{F} 
\lesssim  h^{2l} \| \nabla^l ( B_h v - w) \|^2_{\mcT_h(F)}  
\\
&\quad  \lesssim \| B_h v - w \|^2_{\mcT_h(F)}  
\lesssim \| v - w \|^2_{S_h(\mcT_h(F))}
  \lesssim \| v^E - w\|^2_{\omega_\delta}
  \lesssim h^{2m} \| v^E \|^2_{H^m(\omega_\delta)}
\end{align}
where we used inverse inequalities to pass from the face to elements and to remove derivatives, then we 
used the identity 
\begin{equation}
 (B_h v - w)|_{T_i}  = (v|_{S_h(T_i)})^e -   (w|_{S_h(T_i)})^e =  ((v-w)|_{S_h(T_i)})^e 
\end{equation}
followed by stability 
\begin{equation}\label{eq:polextstab}
\| (q|_{S_h(T)})^e  \|_{T} \lesssim \| q \|_{S_h(T)}, \qquad q \in \mathbb{Q}_p(S_h(T)) 
\end{equation}
of polynomial extension, and finally, in the last inequality we choose $w$ to be the $L^2(\omega_\delta)$ projection of the continuous extension $v^E$ restricted to $\omega_\delta$ and 
used a standard approximation result on the ball with diameter $\delta \lesssim h$. Note that we need the continuous extension in the last step since the ball $\omega_\delta$ may not be contained in $\Omega$.
Summing over all faces and using the stability (\ref{eq:ext-stab}) of the continuous extension operator the 
desired estimate follows.
\end{proof}

\begin{lem}[Stability]\label{lem:exth-stab} There is a constant such that
\begin{align}
\boxed{ \| \nabla^m E_h v \|_{\Omegah} \lesssim \| \nabla^m v \|_{\Omega} , \qquad 0\leq m \leq k, 
\quad v \in V_{h,p,k} }
\end{align}
\end{lem}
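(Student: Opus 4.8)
The plan is to prove the stability estimate
\[
\| \nabla^m E_h v \|_{\Omegah} \lesssim \| \nabla^m v \|_{\Omega},
\qquad 0 \leq m \leq k,
\]
by decomposing the error $E_h v - B_h v = \pi_h B_h v - B_h v$ and exploiting the two key facts already established: the approximation estimate for the interpolant of a discontinuous piecewise polynomial (Lemma~\ref{lem:pih-approx-disc}) and the jump bound (Lemma~\ref{lem:jumpest}). The first step is the triangle inequality,
\[
\| \nabla^m E_h v \|_{\Omegah}
\leq \| \nabla^m (E_h v - B_h v) \|_{\Omegah} + \| \nabla^m B_h v \|_{\Omegah},
\]
which splits the task into controlling the interpolation error and the raw extension $B_h v$.

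For the second term I would argue directly from the definition of $B_h$ in (\ref{def:Bh}): on large elements $B_h v = v$, and on small elements $B_h v$ is the canonical polynomial extension from the associated large element $S_h(T)$. Using the polynomial extension stability (\ref{eq:polextstab}) together with an inverse inequality to handle the derivatives, and the fact that $\diam(S_h(T)\cup T)\lesssim h$, I expect $\| \nabla^m B_h v\|_{\mcThs} \lesssim \| \nabla^m v\|_{\Omegahl}$, so that summing over macro elements gives $\| \nabla^m B_h v \|_{\Omegah} \lesssim \| \nabla^m v \|_{\Omega}$. One subtlety here is that taking $m$ derivatives of the extended polynomial and comparing on $T$ versus $S_h(T)$ requires an inverse estimate since (\ref{eq:polextstab}) is only stated in $L^2$; this costs no powers of $h$ net because the scaling is the same on both elements, but it must be justified carefully.

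The first term is where the two lemmas combine. Since $m\leq k$, I would use an inverse inequality to reduce the $H^m$ norm of the interpolation error on each element to $h^{-m}$ times its $L^2$ norm (both $E_h v = \pi_h B_h v$ and $B_h v$ are piecewise polynomials, so this is legitimate), then apply Lemma~\ref{lem:pih-approx-disc} to $B_h v \in V_{h,p,-1}$ to obtain
\[
\| E_h v - B_h v \|_{\Omegah} = \| B_h v - \pi_h B_h v \|_{\Omegah} \lesssim \| B_h v \|_{j_h},
\]
and finally invoke Lemma~\ref{lem:jumpest} to bound $\| B_h v \|_{j_h} \lesssim h^m \|\nabla^m v\|_{\Omega}$. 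The $h^{-m}$ from the inverse inequality then cancels the $h^m$ from the jump estimate, yielding $\| \nabla^m (E_h v - B_h v) \|_{\Omegah} \lesssim \| \nabla^m v \|_{\Omega}$.

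I expect the main obstacle to be handling the derivatives cleanly. The two supporting lemmas are both stated in $L^2$ (Lemma~\ref{lem:pih-approx-disc} bounds the $L^2$ error, and Lemma~\ref{lem:jumpest} bounds the scalar quantity $\|B_h v\|_{j_h}$), whereas the target is an $H^m$-seminorm estimate. Everything therefore hinges on correctly inserting inverse inequalities so that the powers of $h$ balance, and on verifying that the inverse inequality applied to $E_h v - B_h v$ is valid despite $E_h v$ being globally $C^k$ and $B_h v$ being only piecewise polynomial — this is fine since the difference is still a piecewise polynomial on $\mcT_h$, but it is the point requiring the most care. The restriction $m\leq k$ is essential precisely so that the jump terms up to order $k$ that appear implicitly vanish for the smooth part, and I would flag that the constant depends on $p$ and $k$ through these inverse estimates.
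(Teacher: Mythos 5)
Your proposal is correct and follows essentially the same route as the paper: the same decomposition $E_h v = (\pi_h - I)B_h v + B_h v$, the same use of polynomial extension stability for the $B_h v$ term, and the same combination of an elementwise inverse inequality with Lemma~\ref{lem:pih-approx-disc} and Lemma~\ref{lem:jumpest} so that the $h^{-m}$ and $h^{m}$ factors cancel. The only cosmetic difference is that for $\|\nabla^m B_h v\|$ the paper invokes the gradient version of \eqref{eq:polextstab} directly (the canonical extension commutes with differentiation, so the $L^2$ stability can be applied to $\nabla^m v$ componentwise), whereas you reach the same bound via an inverse estimate.
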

\begin{proof}  Using the stability (\ref{eq:polextstab}) of polynomial extension we get  
\begin{equation}
\| \nabla^m v^e \|_{M_T} \lesssim \| \nabla^m v \|_{T}, \qquad v \in \IQ_p(T), \quad T \in \mcT_h^L
\end{equation}
where we recall that $M_T = \cup_{T \in S_h^{-1}( T )} T$,  and therefore 
\begin{equation}\label{eq:Bhstab}
\| \nabla^m B_h v \|_{\mcM_h} \lesssim \| \nabla^m v \|_{\mcThl}
\end{equation}
Adding and subtracting $v \in V_{h,p,k}$ we have 
\begin{align}
\| \nabla^m E_h v \|_{\Omegah} &= \|\nabla^m \pi_h B_h  v \|_{\Omegah} 
\\
&\lesssim \| \nabla^m (\pi_h - I) B_h  v \|_{\mcTh} + \| \nabla^m B_h v \|_{\mcM_h}
\\
&\lesssim  h^{-m} \| (\pi_h - I) B_h  v \|_{\mcTh} + \| \nabla^m v \|_{\mcThl}
\end{align}
where we used the stability (\ref{eq:Bhstab}) of $B_h$ to estimate the second term. For 
the first term we employ  Lemma \ref{lem:pih-approx-disc} and Lemma \ref{lem:jumpest}, 
\begin{align}
h^{-m} \| (\pi_h - I) B_h  v \|_{\mcTh} & \lesssim h^{-m} \| B_h v \|_{j_h} \lesssim \| v \|_{H^m(\Omega)} 
\end{align}
which completes the proof.
\end{proof}

We now define an interpolant $\pi_h^E:L^2(\Omega) \rightarrow V_{h,p,k}$ by applying the discrete extension operator to the interpolation operator defined in \eqref{def:pih}, $\pi_h^E v := E_h \pi_h v$. This interpolation into the extended space satisfies the following approximation result.
\begin{lem}[Approximation] \label{lem:exth-approx} There is a constant such that for all $v \in H^r(\Omega)$,
\begin{align}
\boxed{ \| v - \pi_h^E v \|_{H^m(\Omegah)} \lesssim h^{s - m} \| v \|_{H^s(\Omega)}, \quad 0 \leq m \leq k, \quad s = \min(r,p+1) }
\end{align}
\end{lem}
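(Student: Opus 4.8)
The plan is to split the error with the triangle inequality into the genuine interpolation error of $\pi_h$ and a correction term measuring how much the discrete extension disturbs $\pi_h v$,
\begin{equation*}
\| v - \pi_h^E v \|_{H^m(\Omegah)} \leq \| v - \pi_h v \|_{H^m(\Omegah)} + \| (I - E_h)\pi_h v \|_{H^m(\Omegah)} .
\end{equation*}
The first term is exactly what Lemma~\ref{lem:pih-approx} delivers, namely $\lesssim h^{s-m}\|v\|_{H^s(\Omega)}$, so all the work lies in the correction term. Here I would first rewrite it in terms of the building blocks: setting $u = \pi_h v \in V_{h,p,k}$ and using that $\pi_h$ is idempotent on the spline space together with $E_h = \pi_h B_h$ from \eqref{def:Eh}, one obtains $(I-E_h)u = \pi_h u - \pi_h B_h u = \pi_h(I-B_h)u$.

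Next, since $m \leq k \leq p-1$ and $\pi_h(I-B_h)u$ is a $C^k$ spline on a quasi-uniform mesh, an inverse estimate followed by the global $L^2$ stability of $\pi_h$ (Lemma~\ref{lem:pih-stab} summed over elements, using the finite overlap of the patches $\omega_{h,T}$) gives
\begin{equation*}
\| (I-E_h)u \|_{H^m(\Omegah)} \lesssim h^{-m} \| \pi_h(I-B_h)u \|_{\Omegah} \lesssim h^{-m} \| (I-B_h)u \|_{\Omegah} .
\end{equation*}
The crucial point is then to extract the full power $h^{s}$ from $\|(I-B_h)u\|_{\Omegah}$ so as to offset the $h^{-m}$. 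For this I would invoke the macro-element invariance \eqref{eq:Bh-inv}: for any $w$ in the macro-element-wise polynomial space $V^M_{h,p,-1}$ we have $B_h w = w$, whence $(I-B_h)u = (I-B_h)(u-w)$, and the $L^2$ stability of $B_h$ coming from the polynomial-extension bound \eqref{eq:polextstab} (together with the boundedly many small elements mapped to each large one) yields $\|(I-B_h)u\|_{\Omegah} \lesssim \| u - w \|_{\Omegah}$.

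Choosing $w$ macro-element by macro-element to be the $L^2$ projection of the continuous extension $v^E$ onto $\IQ_p(M_T)$, I would split $\| u - w \|_{\Omegah} \leq \| \pi_h v - v^E \|_{\Omegah} + \| v^E - w \|_{\Omegah}$; the first term is bounded by $h^{s}\|v\|_{H^s(\Omega)}$ via Lemma~\ref{lem:pih-approx} with $m=0$, and the second by a Bramble--Hilbert estimate on the balls of diameter $\lesssim h$ containing each $M_T$ (cf.\ \eqref{eq:macrodiam}), in both cases invoking the stability \eqref{eq:ext-stab} of the continuous extension. Summing over the macro elements gives $\|(I-B_h)u\|_{\Omegah} \lesssim h^{s}\|v\|_{H^s(\Omega)}$, so the correction term is $\lesssim h^{s-m}\|v\|_{H^s(\Omega)}$, and adding the two contributions completes the proof. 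I expect the main obstacle to be precisely the construction of this comparison function $w$: it must simultaneously be reproduced by $B_h$, so that the $h^{-m}$ loss from the inverse estimate is paid back, and approximate $v$ to order $h^{s}$. The invariance \eqref{eq:Bh-inv} is exactly what reconciles these two demands, and one must be careful that $v^E$, not $v$, enters because the macro elements and their covering balls can protrude outside $\Omega$.
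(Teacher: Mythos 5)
Your proposal is correct and follows essentially the same route as the paper: after the triangle inequality your correction term $\pi_h(I-B_h)\pi_h v = \pi_h(I-B_h)(\pi_h v - \pi_h^M v)$ is, by idempotence of $\pi_h$ on the spline space, exactly the paper's decomposition into $\pi_h(v-\pi_h^M v)$ and $\pi_h B_h(\pi_h^M v - \pi_h v)$, handled with the same ingredients (invariance \eqref{eq:Bh-inv}, inverse estimates, $L^2$ stability of $\pi_h$ and $B_h$, the macro-element projection estimate, and Lemma~\ref{lem:pih-approx}). The only difference is cosmetic ordering of the algebra, so no further comment is needed.
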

\begin{proof}  Let $\pi_h^M:L^2(\Omegah) \rightarrow V_{h,p,-1}^M$ be the element wise 
$L^2$ projection. We then have 
\begin{align}\label{eq:macro-approx}
\| v - \pi_h^M v \|_{\mcM_h} \lesssim h^s | v |_{H^s(\mcM_h)} , \quad \quad s = \min(r,p+1)
\end{align}
Adding and subtracting  $\pi_h v$ and $\pi_h \pi_h^M v = \pi_h B_h \pi_h^M v$, where 
we used the invariance (\ref{eq:Bh-inv}), we get the identity 
\begin{align}
 v - \pi_h B_h (\pi_h v) &=  v - \pi_h v + \pi_h v  - \pi_h \pi_h^M v  + \pi_h \pi_h^M v  - \pi_h B_h (\pi_h v)
 \\
  &=  v - \pi_h v + \pi_h (v  -  \pi_h^M v)  + \pi_h B_h (\pi_h^M v  - \pi_h v)
\end{align}
Using the triangle inequality, inverse inequalities, the $L^2$ stability of $\pi_h$ and $B_h$,
\begin{align}
 &\|v - \pi_h B_h (\pi_h v)\|_{H^m(\mcTh)} 
\\
 &\qquad \leq \| v - \pi_h v \|_{H^m(\mcTh)} + \|\pi_h (v  -  \pi_h^M v)  \|_{H^m(\mcTh)}  
  + \| \pi_h B_h (\pi_h^M v  - \pi_h v) \|_{H^m(\mcTh)} 
  \\
    &\qquad \lesssim \| v - \pi_h v \|_{H^m(\mcTh)} + h^{-m} \|\pi_h (v  -  \pi_h^M v)  \|_{\mcTh} 
  + h^{-m} \| \pi_h B_h (\pi_h^M v  - \pi_h v) \|_{\mcTh} 
    \\ \label{eq:exth-approx-b}
    &\qquad \lesssim \| v - \pi_h v \|_{H^m(\mcTh)} + h^{-m} \|v  -  \pi_h^M v \|_{\mcTh} 
  + h^{-m} \| \pi_h^M v  - \pi_h v \|_{\mcThl} 
\\
  &\qquad \lesssim h^{s - m } \| v \|_{H^{s}(\Omega)}
\end{align}
Here we used the interpolation estimate for $\pi_h$  in Lemma \ref{lem:pih-approx} and 
the estimate (\ref{eq:macro-approx}) for $\pi_h^M$. To estimate the third term in (\ref{eq:exth-approx-b}) 
we added and subtracted $v$ and once again used the approximation of $\pi_h^M$ and $\pi_h$.
\end{proof}

We end this section with some results on the properties of the extended basis. To that end, let $I^L\subset I$ 
be the indices for the basis $\mcB^L$, and note that $\mcB^E =  \{ E_h \varphi_i : i \in I^L\}$ is a basis in $V^E_{h,p,k}$.  Using the notation 
\begin{equation}
\varphi_i^E = E_h \varphi_i 
\end{equation}
for the extended basis functions, we then have the expansion 
\begin{equation}
v = \sum_{i=1}^{N^L} \hatv_i \varphi_i^E
\end{equation}
of $v \in V^E_{h,p,k}$. We next present a lemma that collects the basic properties of
the extended basis functions and then we show an equivalence between the degrees of freedom norm 
and the $L^2$ norm. The latter result is crucial in the proof of bounds on the condition number of stiffness 
and mass matrices.

\begin{lem}[Properties of Extended Basis Functions] There are constants such that 
\begin{equation}\label{eq:phiEprop}
\diam (\supp (\varphi_i^E ) ) \lesssim h, \qquad 
\| \varphi_i^E \|_{L^\infty(\Omega_h)} \lesssim 1
\end{equation}
and, with $\delta_{ij} = 1$ if $\supp(\varphi_i^E) \cap \supp(\varphi_j^E)\neq \emptyset$ and $0$ otherwise, 
\begin{equation}\label{eq:phiEintersec}
\max_{i \in I^L} \sum_{j\in I^L} \delta_{ij} \lesssim 1
\end{equation}
\end{lem}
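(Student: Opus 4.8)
The three estimates rest on the factorization $\varphi_i^E = \pi_h B_h \varphi_i$ together with the locality and normalization already recorded in the excerpt, and I would establish them in the order listed. For the diameter bound I would first track how each operator enlarges supports. By \textbf{A1} (and the uniformity of the mesh) $\diam(\supp(\varphi_i))\lesssim h$. On a small element $T\in\mcThs$ the value $(B_h\varphi_i)|_T=(\varphi_i|_{S_h(T)})^e|_T$ is nonzero only when $\varphi_i$ fails to vanish on $S_h(T)$, and since $\diam(S_h(T)\cup T)\lesssim h$ by (\ref{eq:shassump}), the operator $B_h$ enlarges the support by at most $O(h)$, so $\diam(\supp(B_h\varphi_i))\lesssim h$. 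The interpolant $\pi_h$ is local by (\ref{eq:omegahTdiam}): $(\pi_h w)|_T$ depends only on $w|_{\omega_{h,T}}$ with $\diam(\omega_{h,T})\lesssim h$, so applying $\pi_h$ again enlarges the support by at most $O(h)$. Chaining the two bounds yields $\diam(\supp(\varphi_i^E))\lesssim h$.

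For the $L^\infty$ bound the plan is to show that each factor is $L^\infty$-stable. For the unextended basis, \textbf{A2} says that the coefficient vector of $\varphi_i|_T$ in $\mcB_T$ is a unit vector, so (\ref{eq:BTstab}) gives $\|\varphi_i\|_T\sim h^{d/2}$ and an inverse estimate on $\IQ_p(T)$ yields $\|\varphi_i\|_{L^\infty(\Omegah)}\lesssim 1$. For $B_h$, on each small element the restriction is a canonical polynomial extension between two boxes of size $\sim h$ whose union has diameter $\lesssim h$; after rescaling to unit size this is a fixed linear map on the finite-dimensional space $\IQ_p$, hence bounded in $L^\infty$, so $\|B_h\varphi_i\|_{L^\infty(\Omegah)}\lesssim 1$. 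Finally, for $\pi_h$ applied to a piecewise polynomial $w$ the coefficients satisfy $|\langle \hatv_{T,i}\rangle_{T\in\mcT_{h,i}}|\lesssim\max_T|\hatv_{T,i}|\lesssim h^{-d/2}\|w\|_{T}\lesssim\|w\|_{L^\infty(\Omegah)}$ by (\ref{eq:BTstab}); since at most $|I_T|\lesssim 1$ basis functions are active on any $T$ and each has $\|\varphi_i\|_{L^\infty}\lesssim 1$, I obtain $\|\pi_h w\|_{L^\infty(\Omegah)}\lesssim\|w\|_{L^\infty(\Omegah)}$. Composing the three estimates gives $\|\varphi_i^E\|_{L^\infty(\Omegah)}\lesssim 1$.

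For the bounded overlap, write $\omega_j=\supp(\varphi_j^E)$, so $\diam(\omega_j)\lesssim h$ by the first part. Fixing $i$ and a point $x_i\in\omega_i$, if $\delta_{ij}=1$ then $\omega_i\cap\omega_j\neq\emptyset$ and the diameter bounds force $\omega_j\subset B(x_i,Ch)$ for a fixed constant $C$. I would then anchor each $\varphi_j^E$ to a large element: since $j\in I^L$ the support $\supp(\varphi_j)$ contains a large element $T_j\in\mcThl$, and by the invariance (\ref{eq:invariant-large}) we have $\varphi_j^E|_{T_j}=\varphi_j|_{T_j}\neq 0$, so $T_j\subset\omega_j\subset B(x_i,Ch)$. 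Because the mesh is uniform, $B(x_i,Ch)$ contains only $\lesssim 1$ elements, and each element $T$ lies in the support of at most $|I_T|\lesssim 1$ basis functions; counting the distinct pairs $(T_j,j)$ therefore gives $\sum_{j\in I^L}\delta_{ij}\le\sum_{T\subset B(x_i,Ch)}|I_T|\lesssim 1$.

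The routine parts are the support tracking and the coefficient bounds, which follow directly from \textbf{A1}, \textbf{A2}, and the locality already available in the excerpt. The two steps that require genuine care are the $L^\infty$-stability of the preliminary extension $B_h$, where one must use that two boxes of comparable size at distance $\lesssim h$ produce a uniformly bounded polynomial extension, and the anchoring in the overlap count: invoking (\ref{eq:invariant-large}), valid for weights satisfying (\ref{eq:weights-rest}), is exactly what guarantees that every extended basis function still occupies at least one large element, making the pair counting finite-to-one. I expect the overlap argument to be the main obstacle, since the naive bound via $\diam(\supp(\varphi_i^E))\lesssim h$ alone does not preclude many distinct indices sharing the same enlarged support without the large-element anchoring.
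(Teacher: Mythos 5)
Your treatment of the diameter bound and the $L^\infty$ bound is correct and essentially the paper's argument: the support is tracked first through $B_h$ using (\ref{eq:shassump}) (equivalently, through the macro elements $M_T$) and then through $\pi_h$ using the locality (\ref{eq:omegahTdiam}), while the uniform bound combines the basis stability (\ref{eq:BTstab}), inverse estimates on $\IQ_p$, and the stability of canonical polynomial extension between elements at distance $\lesssim h$. The paper phrases the second part as an $L^\infty\to L^2\to L^\infty$ sandwich on a single element using the $L^2$ stability of $\pi_h$, whereas you prove $L^\infty$ stability of each factor separately; the two computations are interchangeable.

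The one genuine issue is in the overlap count (\ref{eq:phiEintersec}). You anchor $\varphi_j^E$ to a large element $T_j$ by invoking the invariance (\ref{eq:invariant-large}), i.e.\ $\varphi_j^E|_{T_j}=\varphi_j|_{T_j}\neq 0$, but (\ref{eq:invariant-large}) holds only when the averaging weights satisfy the restriction (\ref{eq:weights-rest}), which is \emph{not} among the hypotheses of this lemma (the paper imposes it only in the subsequent lemma on the degrees-of-freedom norm). For general admissible weights $\varphi_j^E$ need not agree with $\varphi_j$ on any large element, so the step $T_j\subset\supp(\varphi_j^E)$ is unjustified as written. The repair is already contained in your first paragraph: the support tracking shows, for arbitrary weights, that $\supp(\varphi_j^E)$ lies in an $O(h)$-neighborhood of $\supp(\varphi_j)$; hence $\supp(\varphi_i^E)\cap\supp(\varphi_j^E)\neq\emptyset$ forces $\supp(\varphi_j)\subset B(x_i,C'h)$ after enlarging the constant and using $\diam(\supp(\varphi_j))\lesssim h$, and your counting of the pairs $(T_j,j)$ with $j\in I_{T_j}$ then goes through unchanged. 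The paper instead packs the midpoints $x_i$ of the distinct elements associated with the basis functions, which are separated by $\gtrsim h$; both are finite-packing arguments, but yours should be anchored to the untouched support $\supp(\varphi_j)$ rather than to $\supp(\varphi_j^E)$ via the invariance.
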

\begin{proof}
We first note that if there is no element $T \in S_h(\mcT_h^S)$ in the support 
$\supp(\varphi_i)$ then $\varphi_i^E = E_h \varphi_i = \varphi_i$. These are basis functions in the interior that 
are not affected by the extension and obviously satisfy the desired properties.  If on the other hand there is 
$T \in \mcT_h^S$ such that $S_h(T) \subset \supp(\varphi_i)$ then $\varphi_i^E \neq \varphi_i$. Using the definition (\ref{def:Eh}) of the extension operator 
$E_h$ we have $\varphi_i^E = \pi_h B_h \varphi_i$. We start by noting that the support of $B_h \varphi_i$ is 
given by 
\begin{equation}
\supp(B_h \varphi_i) = \cup_{T \in \mcT_h^L, T \subset \supp(\varphi_i)} M_T
\end{equation}
where $M_T$ is the macro element defined in (\ref{eq:macro}). Using the fact that $\diam(\supp(\varphi_i)) \lesssim h$ and $\diam(M_T) \lesssim h$ we conclude that 
\begin{equation}\label{eq:diamBhphi)}
\diam( \supp(B_h \varphi_i) ) \lesssim h
\end{equation}
Recalling the 
definition (\ref{eq:omegahT}) and properties of the domain $\omega_{h,T}$ that influences $\pi_h v$ on $T$, we finally find that 
\begin{align}
\supp(\varphi_i^E) \subset \cup_{\omega_{h,T} \cap  \supp(B_h \varphi_i ) \neq \emptyset} \omega_{h,T}
\end{align}
which in particular means that 
\begin{equation}
\diam( \supp (\varphi_i^E ) ) \lesssim h
\end{equation}
since $\diam(\omega_{h,T}) \lesssim h$ and $ \diam( \supp(B_h \varphi_i) ) \lesssim h $, see (\ref{eq:omegahTdiam}) and (\ref{eq:diamBhphi)} ).  

Next using an inverse estimate to pass from the max norm to the $L^2$ norm followed by the $L^2$ stability 
of $\pi_h$ we have for any element $T \subset \supp (\varphi_i^E$, 
\begin{align}
\| \pi_h B_h \varphi_i \|^2_{L^\infty(T)} \lesssim h^{-d} \| \pi_h B_h \varphi_i \|^2_{T} 
\lesssim h^{-d} \| B_h \varphi_i \|^2_{\omega_{h,T}} \lesssim \| B_h \varphi_i \|^2_{L^\infty(\omega_{h,T})} 
\end{align}
Using stability of polynomial extension we may estimate the right hand side as follows 
\begin{align}
\| B_h \varphi_i \|^2_{L^\infty(\omega_{h,T})}  \lesssim \| \varphi_i \|^2_{L^\infty(\supp(\varphi_i) \cap \Omegahl)}
\lesssim \| \varphi_i \|^2_{L^\infty(\Omega_h)} \lesssim 1
\end{align} 
Finally, (\ref{eq:phiEintersec}) follows from the fact that $\supp(\varphi_i^E)$, $i \in I^L$, is contained in a ball $B_\delta(x_i)$ 
with $\delta \lesssim h$ and $x_i$ the midpoint of a unique element $T_i$ and therefore $\| x_i - x_j \|_{\IR^d} \gtrsim h$. Thus $\supp(\varphi_i^E) \cap \supp(\varphi_j^E) \neq \emptyset$ for a uniformly bounded number of indices $j\in I^L$.
\end{proof}
%\todo{check how this relates to assumption A1}
\begin{lem}[Equivalence with the Degrees of Freedom Norm] \label{lem:exth-Rn-eqv} Assume that the 
weights $\kappa_{T',i}$ in (\ref{def:pihcoeff}) satisfy (\ref{eq:weights-rest}).  Then there are constants 
such that 
\begin{align}
\boxed{ \| v \|^2_{\Omega} \sim h^{d} \| \hatv \|^2_{\IR^N } }
\end{align}
\end{lem}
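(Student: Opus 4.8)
\section*{Proof proposal}

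The plan is to establish the two bounds separately. For the upper bound $\|v\|^2_\Omega \lesssim h^d \|\hatv\|^2_{\IR^N}$ I would work on the larger set $\Omegah \supset \Omega$ and expand $\|v\|^2_{\Omegah} = \sum_{i,j \in I^L} \hatv_i \hatv_j (\varphi_i^E,\varphi_j^E)_{\Omegah}$. By the preceding lemma each extended basis function satisfies $\|\varphi_i^E\|_{L^\infty(\Omegah)}\lesssim 1$ and $\diam(\supp\varphi_i^E)\lesssim h$, so $\|\varphi_i^E\|_{\Omegah}\lesssim h^{d/2}$ and, by Cauchy--Schwarz, $|(\varphi_i^E,\varphi_j^E)_{\Omegah}|\lesssim h^d$; moreover this inner product vanishes unless $\delta_{ij}=1$. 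Bounding $|\hatv_i\hatv_j|\le\tfrac12(\hatv_i^2+\hatv_j^2)$ and invoking the finite-overlap property (\ref{eq:phiEintersec}), the double sum collapses to $h^d\sum_i\hatv_i^2$, and since $\|v\|_\Omega\le\|v\|_{\Omegah}$ the claim follows.

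The lower bound $h^d\|\hatv\|^2_{\IR^N}\lesssim\|v\|^2_\Omega$ is where the argument must do real work, and it is here that the restriction (\ref{eq:weights-rest}) on the weights enters. I would first use the invariance (\ref{eq:invariant-large}), which guarantees that $v=E_h v^L$ agrees with its large component $v^L=\sum_{i\in I^L}\hatv_i\varphi_i$ on $\Omegahl$. Then I would localise to the large elements: since $\mcB^L=\cup_{T\in\mcThl}\mcB_T$, for every $T\in\mcThl$ one has $I_T\subset I^L$, so on such $T$ the restriction $v^L|_T=\sum_{i\in I_T}\hatv_i\varphi_i|_T$ is a genuine expansion in the local basis and the elementwise stability (\ref{eq:BTstab}) from assumption A2 gives $h^d\sum_{i\in I_T}\hatv_i^2\sim\|v^L\|^2_T$.

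To recover the full coefficient vector I would sum this elementwise equivalence over $T\in\mcThl$. By definition of $\mcB^L$ every index $i\in I^L$ belongs to $I_T$ for at least one large element, so $\sum_{i\in I^L}\hatv_i^2\le\sum_{T\in\mcThl}\sum_{i\in I_T}\hatv_i^2$, and combining with (\ref{eq:BTstab}) yields $h^d\|\hatv\|^2\lesssim\sum_{T\in\mcThl}\|v^L\|^2_T=\|v^L\|^2_{\Omegahl}$.

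The final and main obstacle is that $\Omegahl$ is \emph{not} contained in $\Omega$, so $\|v^L\|^2_{\Omegahl}$ must be traded for $\|v\|^2_\Omega$. I would close this gap elementwise using the defining property (\ref{eq:Largeelem}) of a large element, $|T\cap\Omega|\ge\gamma h^d$, which supplies a subset of $T$ of fixed fractional volume lying inside $\Omega$. For a fixed polynomial degree the measure $|q|^2\,dx$, $q\in\IQ_p(T)$, obeys a Remez/$A_\infty$-type reverse inequality, so that $\|q\|_T\lesssim\|q\|_{T\cap\Omega}$ with a constant depending only on $\gamma$, $p$ and $d$ (and in particular requiring $\gamma>0$). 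Applying this with $q=v^L|_T$ and using $v^L=v$ on $T\cap\Omega\subset\Omegahl$ gives $\|v^L\|^2_T\lesssim\|v\|^2_{T\cap\Omega}$; since the sets $T\cap\Omega$ are pairwise disjoint and contained in $\Omega$, summing over $T\in\mcThl$ produces $\|v^L\|^2_{\Omegahl}\lesssim\|v\|^2_\Omega$, completing the lower bound. I expect the verification of this scale- and cut-independent polynomial norm equivalence to be the only genuinely nontrivial ingredient; the remainder is bookkeeping with assumptions A1, A2 and the extended-basis estimates.
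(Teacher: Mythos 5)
Your proposal is correct and follows essentially the same route as the paper's proof: the lower bound for $h^d\|\hatv\|^2_{\IR^N}$ via the weight restriction \eqref{eq:weights-rest}, the invariance \eqref{eq:invariant-large}, the elementwise equivalence \eqref{eq:BTstab} on large elements, and the $\gamma$-dependent polynomial norm equivalence $\|q\|_T \lesssim_\gamma \|q\|_{T\cap\Omega}$ (which the paper only signals by writing $\lesssim_\gamma$, and which you rightly flag as requiring $\gamma>0$); and the upper bound via the extended-basis properties \eqref{eq:phiEprop}--\eqref{eq:phiEintersec} with Cauchy--Schwarz and finite overlap. No substantive differences.
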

\begin{proof} We recall that since the weights satisfy (\ref{def:pihcoeff}) we have  
\begin{equation}
(E_h v) |_{\Omegahl} = v|_{\Omegahl}
\end{equation}
see (\ref{eq:invariant-large}), which means that the extension does not change $v$ on $\Omegahl$. Then 
we can apply the equivalence on the element level (\ref{eq:BTstab}) on elements $T \in \mcT_h^L$, 
\begin{align} \label{eq:dofs-bound}
h^d \|\hatv\|^2_{\IR^N} \lesssim \sum_{T \in \mcT_h^L} h^d \|\hatv\|^2_{\IR^{N_T}}
\sim
\sum_{T \in \mcT_h^L} \|v\|^2_{T}  \lesssim_\gamma \sum_{T \in \mcT_h^L} \|v\|^2_{T\cap \Omega} 
=
\| v \|^2_\Omega
\end{align}
where we emphasized the dependency of $\gamma$ in the last inequality.
Conversely, using  (\ref{eq:phiEprop}) and (\ref{eq:phiEintersec}) it follows that   
\begin{align}
\| v \|^2_\Omega &= \sum_{i,j\in I^L} \hatv_i \hatv_j (\varphi_i^E,\varphi_j^E)_\Omega
\lesssim \sum_{i,j\in I^L} |\hatv_i|\, |\hatv_j|\, h^d \delta_{ij}
\\
&\qquad \lesssim \sum_{i\in I^L} |\hatv_i|^2 h^d \Big( \sum_{j\in I^L}\delta_{ij} \Big)
\lesssim \sum_{i\in I^L} |\hatv_i|^2 h^d
\end{align}
which completes the proof.
\end{proof}

\section{Application} \label{section:application}

To illustrate the application of the extension operator we consider an elliptic model problem 
with homogeneous boundary conditions in a domain $\Omega$ in $\IR^2$ with smooth 
boundary, find $u : \Omega \rightarrow \IR$ such that 
\begin{align}
-\Delta u = f \quad \text{in $\Omega$}, 
\qquad
u= 0 \quad \text{on  $\partial \Omega$}
\end{align}
We use Nitsche's method together with the extended spline space to discretize the problem. Since the below material is standard and only for illustration we only sketch the main arguments, for simplicity we assume $k=1$. The method 
takes the form: find $u_h \in V_{h,p,k}^E$ such that 
\begin{align}
a_h(u_h,v) = (f,v)_\Omega \,,\quad \forall v\in V_{h,p,k}^E
\end{align}
where 
\begin{align} \label{eq:nitsche-ah}
a_h(v,w) = (\nabla v, \nabla w )_\Omega - (\nabla_n v, w)_{\partial \Omega} - (v,\nabla_n w)_{\partial \Omega} 
+ \beta h^{-1} (v,w)_{\partial \Omega}
\end{align}
As is well known the key property required to show that $a_h$ is coercive on the trial space $V_{h,p,k}^E$ is 
the inverse inequality 
\begin{align}\label{eq:nitsche-inverse}
h \| \nabla_n v \|^2_{\partial \Omega} \lesssim \| \nabla v \|^2_\Omega,  \qquad v \in V_h^E
\end{align}
We note that for $T\in \mcT_h$ we have 
\begin{align}
h \| \nabla_n v \|^2_{\partial \Omega \cap T } \lesssim h |\partial \Omega \cap T|\,   \| \nabla v \|^2_{L^\infty(T)} 
\lesssim h |\partial \Omega \cap T| h^{-d} \| \nabla v \|^2_{T} 
\end{align}
Assuming,  $|\partial \Omega \cap T| h^{-(d-1)} \lesssim 1$ which holds for instance if the boundary is Lipshitz,  we 
get by summation over $T \in \mcT_h$, 
\begin{equation}
h \| \nabla_n v \|^2_{\partial \Omega \cap T } \lesssim \| \nabla v \|^2_{\Omega_h}
\end{equation} 
which combined with the stability of the extension operator in Lemma \ref{lem:exth-stab} directly establish 
the desired estimate (\ref{eq:nitsche-inverse}). It then follows using standard arguments that for $\beta$ large enough (depending on the constant in \eqref{eq:nitsche-inverse}),
\begin{align}
\tn v \tn^2 := \|\nabla v\|_\Omega^2 + \beta/h \|v\|_{\partial \Omega}^2 \lesssim a_h(v,v), \forall v \in V_{h,p,k}^E
\end{align}
If we let $e_h = u_h - \pi_h^E u$ we immediately see that $\tn u-u_h \tn \leq \tn u-\pi_h^E u \tn + \tn e_h \tn$ and 
\begin{align}
\tn e_h \tn^2 \lesssim a_h(e_h,e_h)
\end{align}
By the consistency of Nitsche's method $a_h(e_h,e_h) = a_h(u - \pi_h^E u, e_h)$. Using the Cauchy-Schwarz inequality and \eqref{eq:nitsche-inverse} we see that
\begin{align}
a_h(u - \pi_h^E u, e_h) \lesssim (\tn u - \pi_h^E u \tn +\|h^{\frac12} \nabla_n (u - \pi_h^E u)\|_{\partial \Omega})\tn e_h \tn
\end{align}
Using a trace inequality (see for instance \cite{WX19}) we see that
\begin{align}
\tn u - \pi_h^E u\tn  + \|h^{\frac12} \nabla_n (u - \pi_h^E u)\|_{\partial \Omega} \lesssim \sum_{i=0}^2 h^{i-1} \|D^i (u - \pi_h^E u)\|_{\Omega_h} 
\end{align}
where $D^i$ is the standard multi index notation for derivatives of order $i$.
Then we use the approximation property in Lemma~\ref{lem:exth-approx} 
\begin{align}
\|\nabla (u - \pi_h^E u)\|_\Omega + h^{-1} \|u - \pi_h^E u\|_{\Omega}+ h \|D^2(u - \pi_h^E u)\|_\Omega \lesssim h^{s-1} \| u \|_{H^{s}(\Omega)}
\end{align}
and obtain the optimal order a priori error estimate for $u \in H^r(\Omega)$,
\begin{align}
\tn u - u_h\tn   \lesssim h^{s-1} \| u \|_{H^{s}(\Omega)}, \quad s = \min(r,p+1)
\end{align}
We can also show, following the ideas of \cite{EG06}, using Lemma \ref{lem:exth-Rn-eqv} and the properties of the extension, that the condition number $\kappa$ of the stiffness matrix associated with the form $a_h$ 
satisfies the estimate 
\begin{align} \label{eq:condest}
\kappa \lesssim h^{-2}
\end{align}
\begin{rem}
Observe that it is straightforward to design and analyze a similar method in the case of fourth-order elliptic PDE, indeed the ideas of \cite{BHL2020} carry over verbatim if the ghost penalty terms are omitted.
\end{rem}

\section{Numerical Experiments} 

In this section, we present a number of numerical experiments in 2D, where a high-resolution polygonal domain is immersed in a structured quadrilateral mesh equipped with full regularity B-spline basis functions. For details on implementation, see \cite{MR3682761}.

\paragraph{Extension Choices.}
To construct the extension operator according to the above description, we must make the following three choices:
\begin{itemize}
\item \emph{The value of the parameter $\gamma\geq 0$.}
This parameter determines the partition of the mesh into large elements $\mcT_h^L$ respectively small elements $\mcT_h^S$ according to \eqref{eq:Largeelem}, which in turn gives the partition of the active basis functions into large basis functions $\mcB^L$ respectively small basis functions $\mcB^S$. The partition of the elements in one specific example is illustrated in Figure~\ref{fig:mesh-and-element-association-b} and the corresponding partition of the basis functions in Figure~\ref{fig:basis-decomposition}.

\item \emph{The small-to-large element mapping $S_h:\mcT^S \to \mcT^L$.}
This mapping defines the extension operator $B_h$ according to \eqref{def:Bh}.
In our experiments, we construct $S_h$ by associating each small element $T$ with the large element $T'$ that minimizes the distance between the center of mass of $T \cap \Omega$ and the center of mass of $T' \cap \Omega$. This is illustrated by the arrows in Figure~\ref{fig:mesh-and-element-association-b}.

\item \emph{The weights $\kappa_{T,i}$ in the definition of the interpolant.}
In the extension $E_h v = \pi_h B_h v$ these weights determine the coefficient for $\varphi_i$, defining how much weight $(B_h v)|_T$  should be given for all $T$ in the support of $\varphi_i$.
For our numerical examples we for each large basis function $\varphi_i \in \mcB^L$ set the weights for the elements its support to
\begin{align}
\kappa_{T,i} =
\left\{
\begin{alignedat}{2}
&|T \cap \Omega| , &\qquad& T \in \mcT_h^L
\\
&0 , &\qquad& T \in \mcT_h^S
\end{alignedat}\right.
\end{align}
while we for each small basis function $\varphi_i \in \mcB^S$ set the weights for the elements its support to $\kappa_{T,i} = |T \cap \Omega|$,
whereafter we normalize the weights such that $\sum_{T \in \mcT_{h,i}} \kappa_{T,i} = 1$. This choice fulfills \eqref{eq:weights-rest}, which means that the coefficients for large basis functions are unaffected by the extension.
\end{itemize}
As an illustration of how this realization of the extension couple small basis functions to large basis functions, we present the supports of some extended basis functions in Figure~\ref{fig:extended-basis-support}. Note that the above choices directly affect this outcome, where for instance an average with only a single non-zero entry per basis function would produce fever couplings between any small basis functions and large basis functions.

\begin{figure}\centering
\begin{subfigure}[t]{0.32\linewidth}\centering
\includegraphics[width=0.8\linewidth]{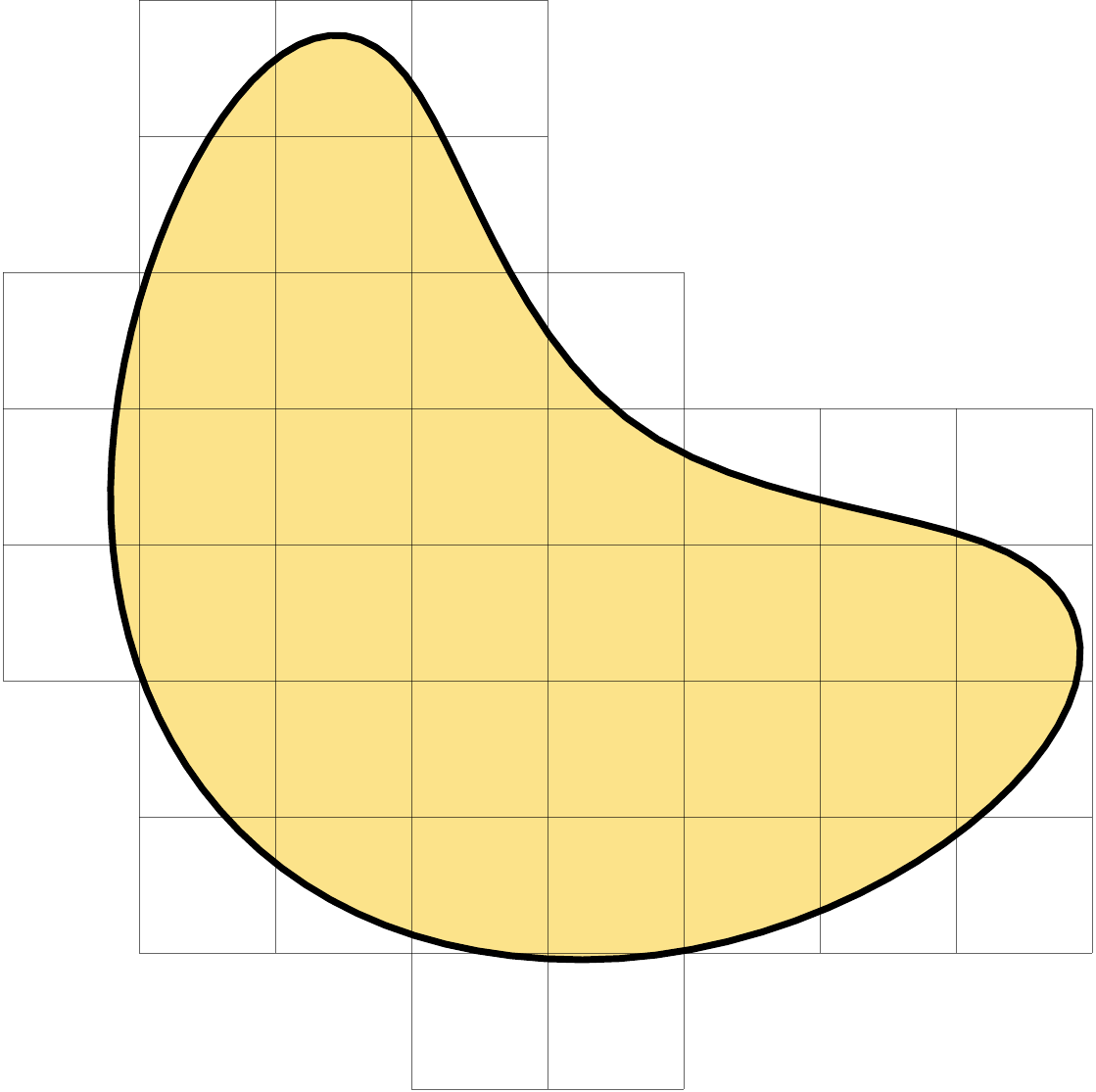}
\subcaption{Immersed domain}
\label{fig:mesh-and-element-association-a}
\end{subfigure}
\begin{subfigure}[t]{0.32\linewidth}\centering
\includegraphics[width=0.8\linewidth]{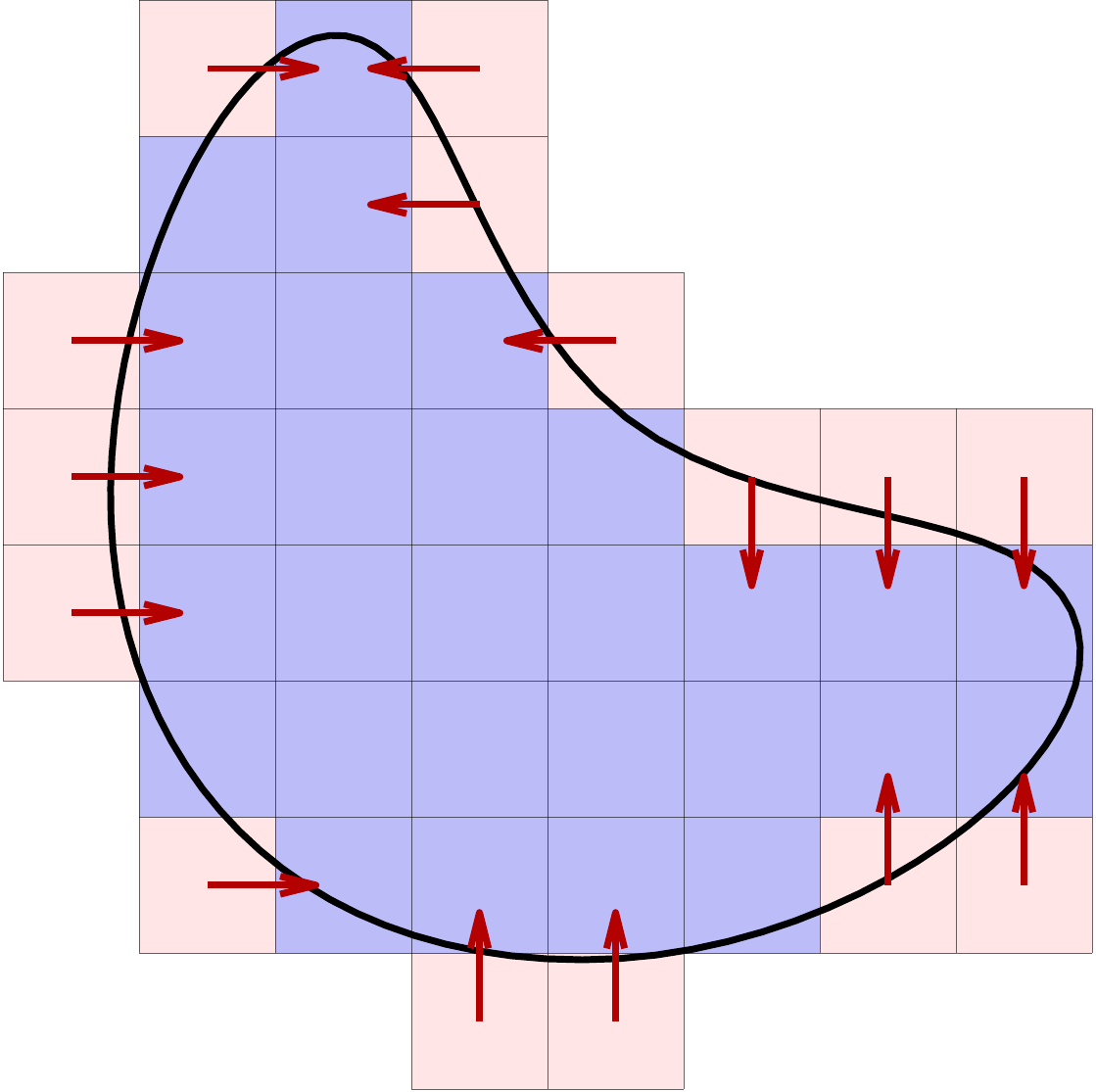}
\subcaption{Element association}
\label{fig:mesh-and-element-association-b}
\end{subfigure}

\caption{
\emph{Mesh and element association.}
(a) The domain $\Omega$ and the active mesh $\mcT_h$.
(b) The case $\gamma=0.5$ with corresponding large elements colored purple and small elements colored pink. The small-to-large element association $S_h$ is illustrated by red arrows.
}
\label{fig:mesh-and-element-association}
\end{figure}

\begin{figure}\centering
\begin{subfigure}[t]{0.32\linewidth}\centering
\includegraphics[trim=120px 40px 100px 20px, clip,width=0.935\linewidth]{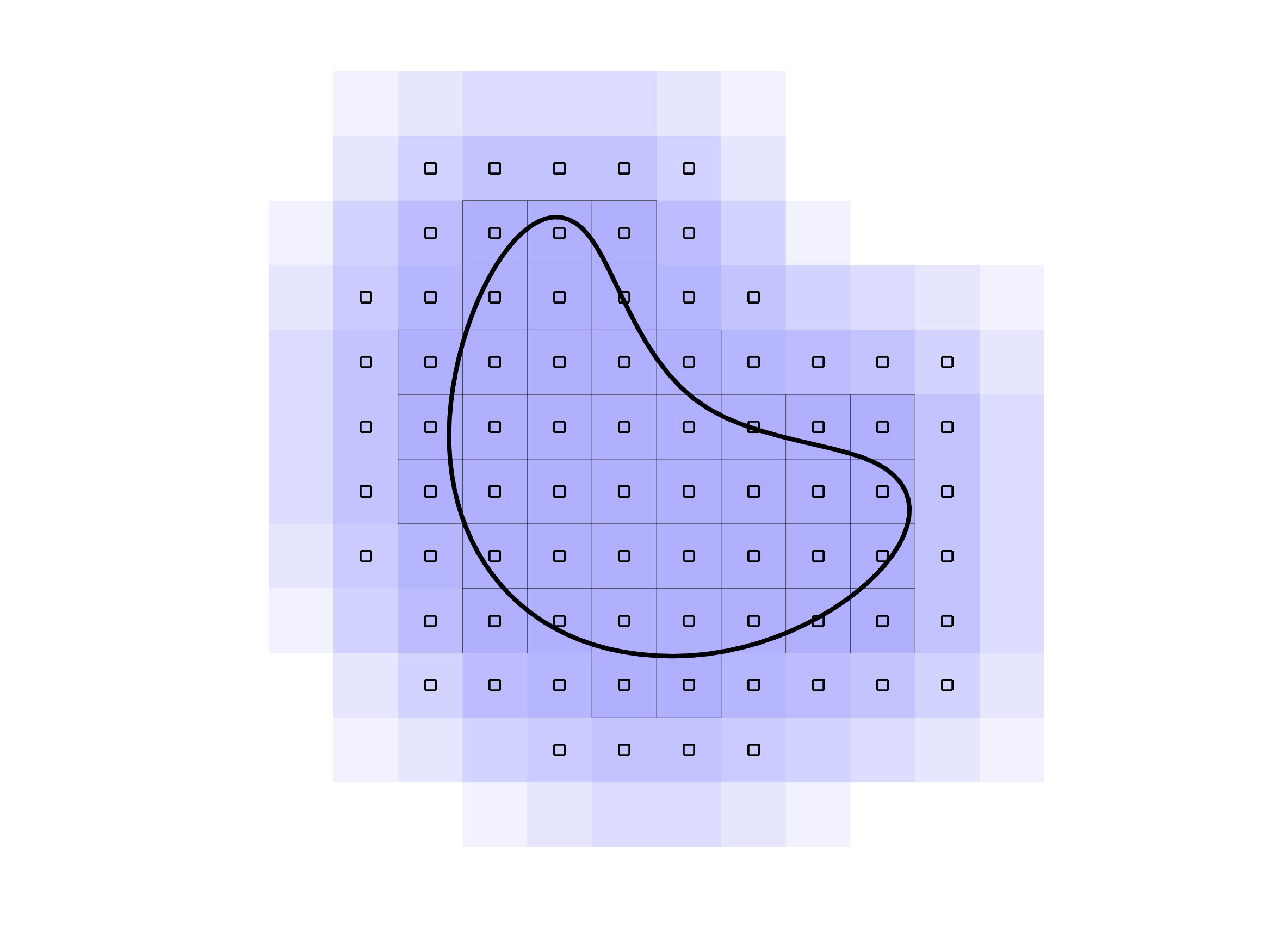}
\subcaption{All basis functions}
\end{subfigure}
\begin{subfigure}[t]{0.32\linewidth}\centering
\includegraphics[trim=120px 40px 100px 20px, clip,width=0.935\linewidth]{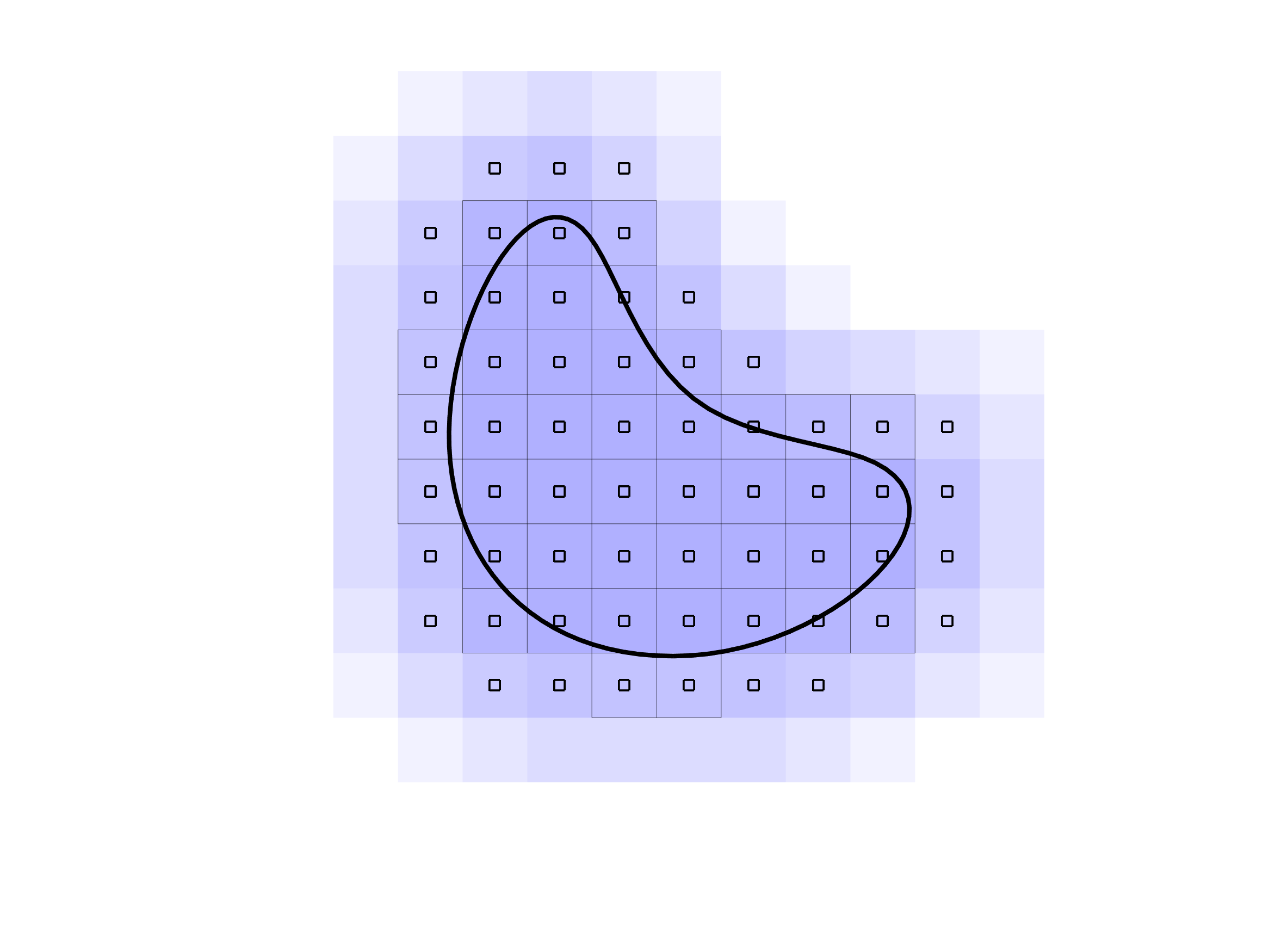}
\subcaption{Large basis functions}
\end{subfigure}
\begin{subfigure}[t]{0.32\linewidth}\centering
\includegraphics[trim=120px 40px 100px 20px, clip,width=0.935\linewidth]{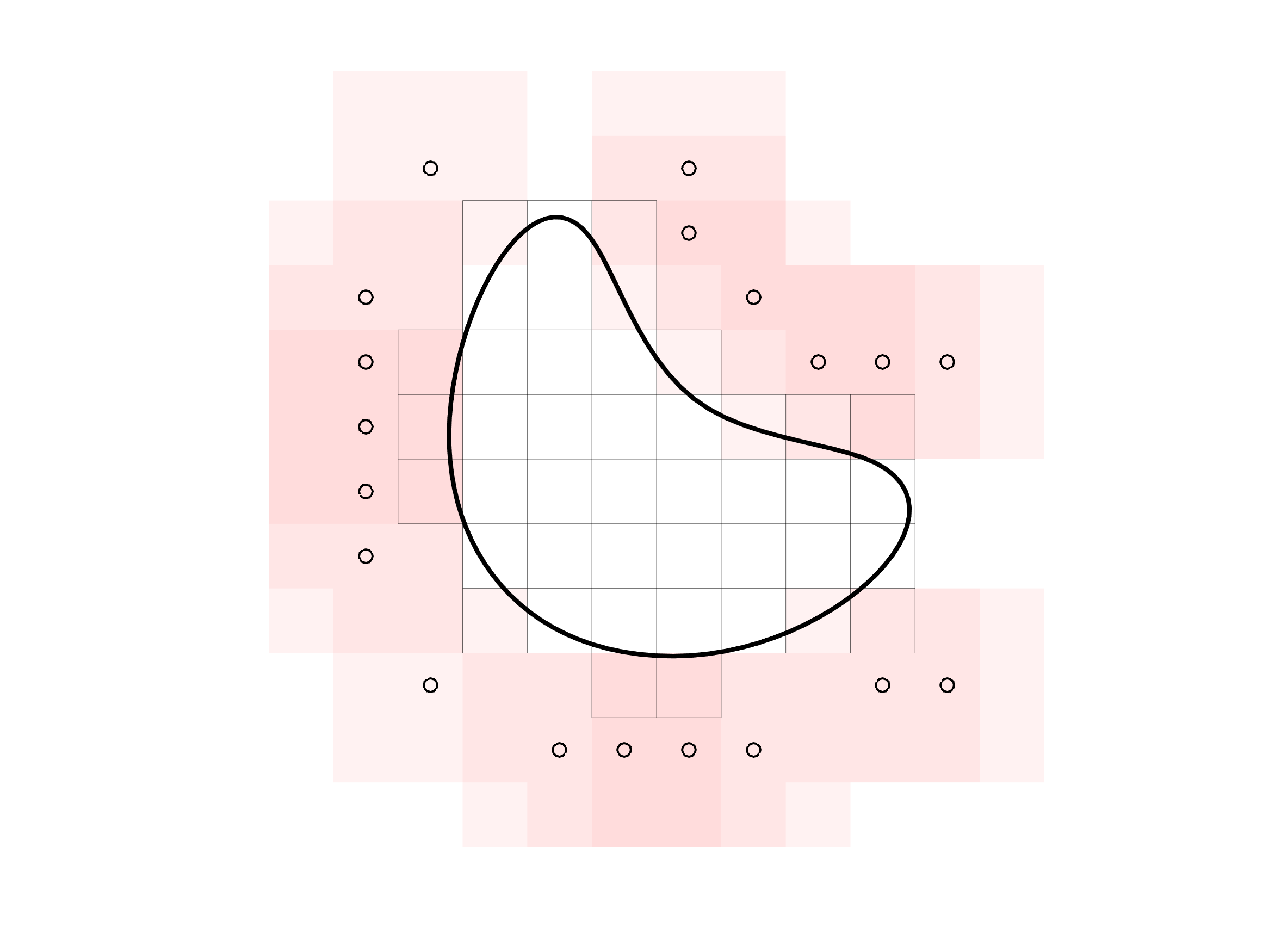}
\subcaption{Small basis functions}
\end{subfigure}

\caption{
\emph{Large and small basis functions.}
Partition of basis functions into large and small in the case $\gamma=0.5$. Here full regularity B-spline basis functions of order $p=2$ are used, and hence the support of a basis function will cover a $3\times 3$ block of elements in the mesh. In the plots, each basis function is indicated by shading of its support and a marker at the center of its support.
}
\label{fig:basis-decomposition}
\end{figure}

\begin{figure}\centering
\includegraphics[width=0.95\linewidth]{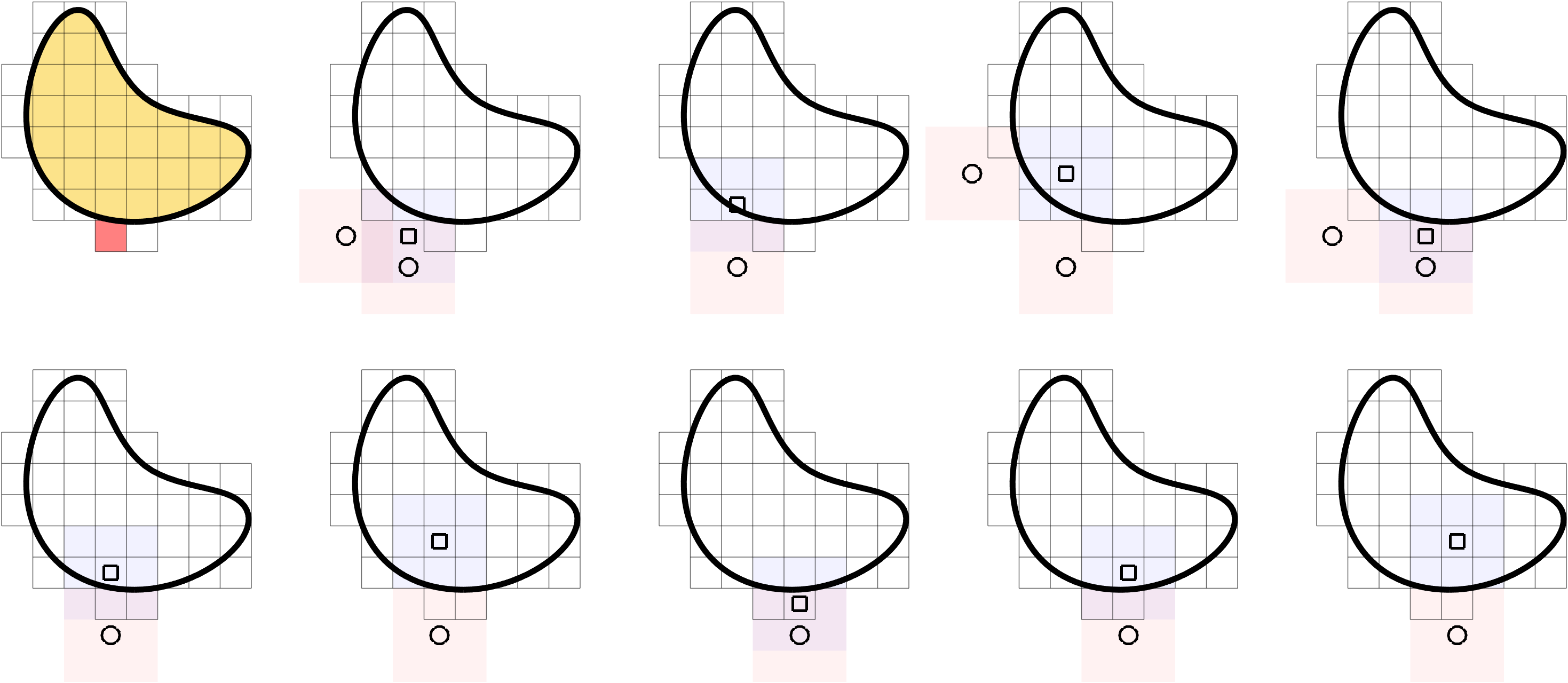}

\caption{
\emph{Extended basis support.}
Illustrations of extended basis functions ($\gamma=0.5$) which incorporate small basis functions with support in the element indicated in the upper left subplot. In each subplot the center of the original large basis function is indicated by a square and its support is shaded in light blue while the centers of associated small basis functions are indicated by circles and their supports are shaded in light pink.
}
\label{fig:extended-basis-support}
\end{figure}

\paragraph{Work Flow.}

In practice the extension is applied to our method on a linear algebra level, by the following steps:
\begin{itemize}
\item Assemble the square linear system of equations for the original method
\begin{align}
\hatA \hatu = \hatb
\end{align}
where $\hatu \in \IR^{\dim(V_{h,p,k})}$ are coefficients for the full approximation space $V_{h,p,k}$.

\item Given $\gamma\geq 0$, assemble the extension matrix $\hatE_h \in \IR^{\dim(V_{h,p,k})\times\dim(V_{h,p,k}^L)}$ that maps the large degrees of freedom onto all degrees of freedom. An example pseudocode description of this assembly is provided in Figure~\ref{alg:extensionMatrix}. In case $\gamma=0$ the extension matrix $\hatE_h$ reduces to the identity matrix.

\item Solve the reduced system
\begin{align}
(\hatE_h^T \hatA \hatE_h) \hatu_\gamma =  \hatE_h^T \hatb
\end{align}
where $\hatu_\gamma \in \IR^{\dim(V_{h,p,k}^L)}$ are coefficients for the extended space $V_{h,p,k}^E$.

\item Expand $\hatu_\gamma$ in coefficients for the full approximation space $V_{h,p,k}$ via
\begin{align}
\hatu^E =  \hatE_h \hatu_\gamma
\end{align}

\end{itemize}

\begin{figure}
{
\small
\begin{algorithmic}
\Procedure{ExtensionMatrix}{$\gamma$}

\State $\mcT_h^L \gets \emptyset$, $\mcB^L \gets \emptyset$
\Comment{Partition of elements and basis functions}
\ForAll{$T \in \mcT_h$}
\If{$\gamma h^d \lesssim |T\cap\Omega|$}
\State $\mcT_h^L \gets \mcT_h^L \cup T$,
 $\mcB^L \gets \mcB^L \cup \bigl( \cup_{i=I_T} \varphi_i \bigr)$
\EndIf
\EndFor
\State $\mcT_h^S \gets \mcT_h \setminus \mcT_h^L$, $\mcB^S \gets \mcB \setminus \mcB^L$
\vskip 7pt

%\Statex

\ForAll{$T \in \mcT_h^S$}
\Comment{Small-to-large element association}
%\If{$T \in \mcT_h^S$}
\State $S_h(T) \gets \arg \min_{T'\in\mcT_h^L} \mathrm{dist}\Bigl( |T\cap\Omega|^{-1} \int_{T\cap\Omega}x \,dx , |T'\cap\Omega|^{-1} \int_{T'\cap\Omega}x \,dx \Bigr)$
%\Else
%\State $S_h(T) \gets T$ 
%\EndIf
\EndFor
\vskip 7pt

%\Statex

\State $\widehat B_h \gets \mathrm{sparse}(|\mcB^{\mathrm{dG}}|,|\mcB|)$
\Comment{Preliminary extension from $V_{h,p,k}^L$ to a dG-space}
\ForAll{$T \in \mcT_h$}
\If{$T \in \mcT_h^S$}
\State $T' \gets S_h(T)$
\For{$i\in I_{T}^\mathrm{dG}$}
\State \textbf{set} $\widehat c\in\IR^{1\times |\mcB_{T'}|}$ such that $\varphi_i|_T = \sum_{j=1}^{|\mcB_{T'}|} \widehat{c}_j \varphi_{I_{T'}(j)}$
\State $\widehat B_h(i,I_{T'}) \gets \widehat c$ 
\EndFor
\Else
\State $\widehat B_h(I_T^\mathrm{dG},I_T) \gets \mathrm{Id}_{|\mcB_T|}$ 
\EndIf
\EndFor
\State $(\widehat B_h)_{*,I^S} \gets \emptyset$ %\Comment{Remove columns corresponding to $\mcB^S$}
\vskip 7pt

%\Statex

\State $\widehat I_h \gets \mathrm{sparse}(|\mcB|,|\mcB^{\mathrm{dG}}|)$
\Comment{Interpolation from the dG-space to $V_{h,k,p}$}
\ForAll{$i\in I$}
\ForAll{$T \in \mcT_{h,i}$}
\If{$\varphi_i \in \mcB^L$ \textbf{and} $T \in \mcT_h^S$}
\State $\kappa_{T,i} \gets 0$
\Else
\State $\kappa_{T,i} \gets |T \cap \Omega|$
\EndIf
\EndFor
\State \textbf{normalize} such that $\sum_{T \in \mcT_{h,i}} \kappa_{T,i} = 1$
\ForAll{$T \in \mcT_{h,i}$}
\State \textbf{find} $j$ such that $I_T(j)=i$
\State $(\widehat I_h)_{i,I^\mathrm{dG}_T(j)} \gets \kappa_{T,i}$
\EndFor
\EndFor
\vskip 7pt

%\Statex

\State \textbf{return} $\widehat E_h \gets \widehat I_h \widehat B_h$
\Comment{Final extension matrix}
\EndProcedure
\end{algorithmic}
}
\caption{\emph{Extension matrix assembly.} For a convenient description we here assume that $B_h$ extends into a discontinuous Galerkin version of the space $V_{h,p,k}$, discontinuous between all elements. We let previously used notations extend to this dG-space, signified by superscript `dG'. The local ordering of the basis functions in the elementwise index sets are assumed fixed.}\label{alg:extensionMatrix}
\end{figure}

\paragraph{Model Problem.} We consider the non-homogeneous version of the application examined in Section~\ref{section:application}, i.e., we solve the Dirichlet problem
\begin{align}
-\Delta u = f \quad \text{in $\Omega$}, 
\qquad
u= g_D \quad \text{on  $\partial \Omega$}
\end{align}
using Nitsche's method:
find $u_h \in V_{h,p,k}^E$ such that 
\begin{align} \label{eq:example-method}
a_h(u_h,v) = l_h(v) \,,\quad \forall v\in V_{h,p,k}^E
\end{align}
with $a_h$ given by \eqref{eq:nitsche-ah} and
\begin{align}
l_h(w) = - (g_D,\nabla_n w)_{\partial \Omega} 
+ \beta h^{-1} (g_D,w)_{\partial \Omega}
\end{align}
In all experiments, we use the Nitsche penalty parameter $\beta=25 p^2$. Note that this method does not include any additional stabilization terms for ensuring optimal stability properties regardless of how the domain cuts through the mesh, cf. \cite{Bu10,BCHLM15,MR4394710}.

In our quantitative experiments, we use the bean shaped geometry in Figure~\ref{fig:mesh-and-element-association-a} as our domain $\Omega$, whose boundary $\partial\Omega$ is constructed as the cubic spline interpolation of the periodic angular data points
\begin{align}
\theta &=
\left\{0, -\pi/2, \pi/20, \pi/4, \pi/2, \pi, 3\pi/2,0 \right\}
\\
x &= 
\left\{
\begin{psmallmatrix} 1 \\ 0 \end{psmallmatrix},
\begin{psmallmatrix} 0 \\ -0.8 \end{psmallmatrix},
\begin{psmallmatrix} 0.7 \\ -0.1 \end{psmallmatrix},
\begin{psmallmatrix} 0.1 \\ 0.1 \end{psmallmatrix},
\begin{psmallmatrix} -0.3 \\ 0.7 \end{psmallmatrix},
\begin{psmallmatrix} -0.8 \\ 0 \end{psmallmatrix},
\begin{psmallmatrix} 0 \\ -0.8 \end{psmallmatrix},
\begin{psmallmatrix} 1 \\ 0 \end{psmallmatrix}
\right\}
\end{align}
A problem with known analytical solution is manufactured by deriving
$f$ and $g_D$ from the ansatz $u=\frac{1}{10}\left(\sin(2x) + x\cos(3y) \right)$, and we present a numerical solution to this problem in Figure~\ref{fig:solution}.

\paragraph{Worst Case Mesh.}
We embed the domain $\Omega$ in a structured quadrilateral mesh equipped with B-spline basis functions of order $p$ and maximum regularity $k=p-1$. To mitigate the effect of chance in the cut situations generated in our convergence and stability studies we for each mesh size $h$ generate a sequence of 100 meshes by shifting the background mesh and then report the worst result from this sequence.

\begin{figure}\centering
\includegraphics[width=0.35\linewidth]{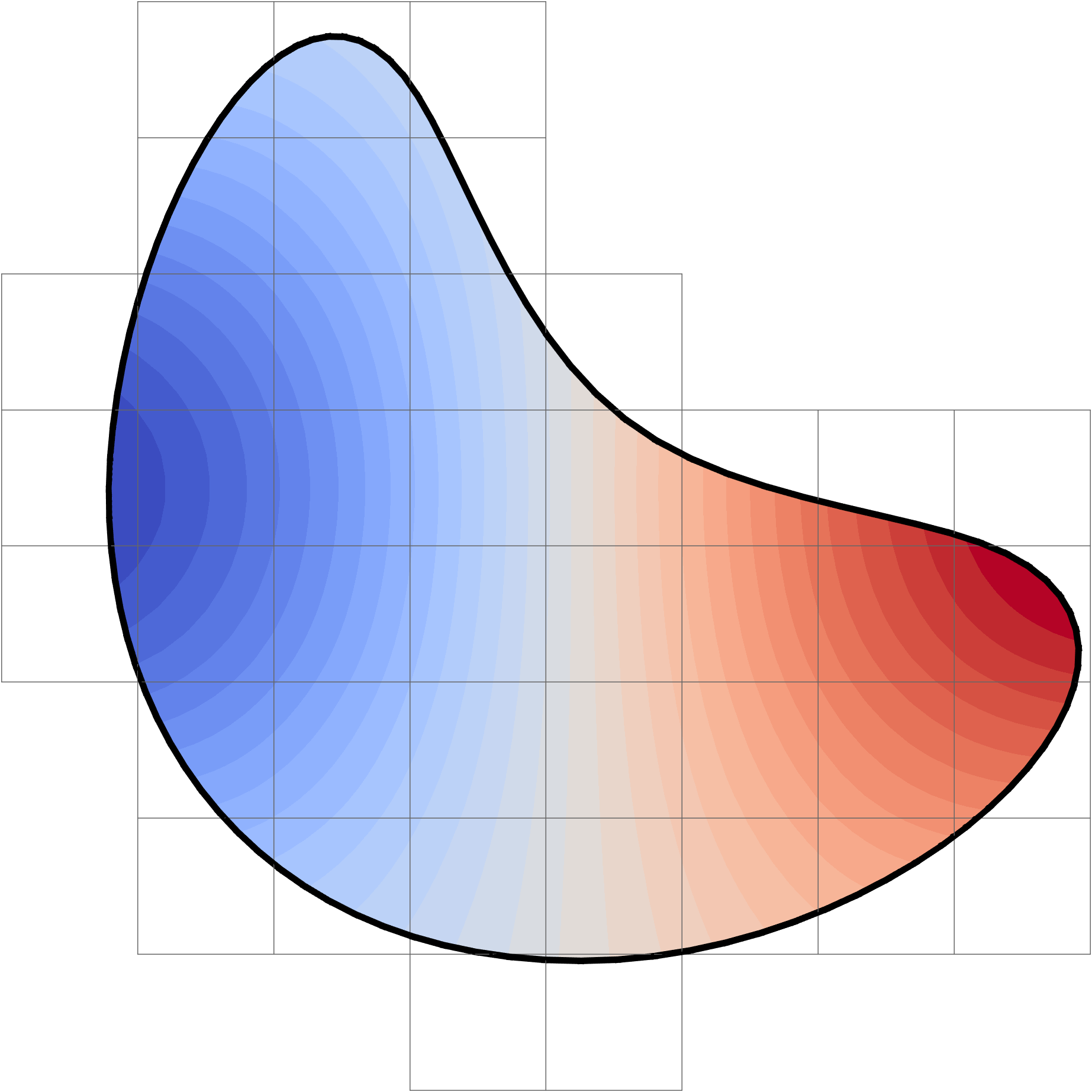}
\caption{
\emph{Numerical solution.}
Model problem solution using extension with $\gamma=0.5$ and full regularity B-spline basis function of order $p=2$.
}
\label{fig:solution}
\end{figure}

\paragraph{Convergence.} 
In Figure~\ref{fig:convergence} we collect our convergence studies:
\begin{itemize}
\item
Firstly, in (a)--(b) we consider convergence using $p=2$ basis functions and extension for a wide range of $\gamma\in[0,1]$. We note poor convergence results in the case without extension ($\gamma=0$), likely due to loss of coercivity in the method in the worst cut situation. We also note that for larger $\gamma$ the errors are initially somewhat higher, which seems reasonable since then more basis functions close to the boundary are extended.

\item
Secondly, in (c)--(d) we consider convergence using $p=1,2,3$ basis functions and extension with $\gamma=1$ and note that optimal order convergence seems to be asymptotically obtained.
\end{itemize}

\begin{figure}\centering
\begin{subfigure}[t]{0.35\linewidth}\centering
\includegraphics[width=0.9\linewidth]{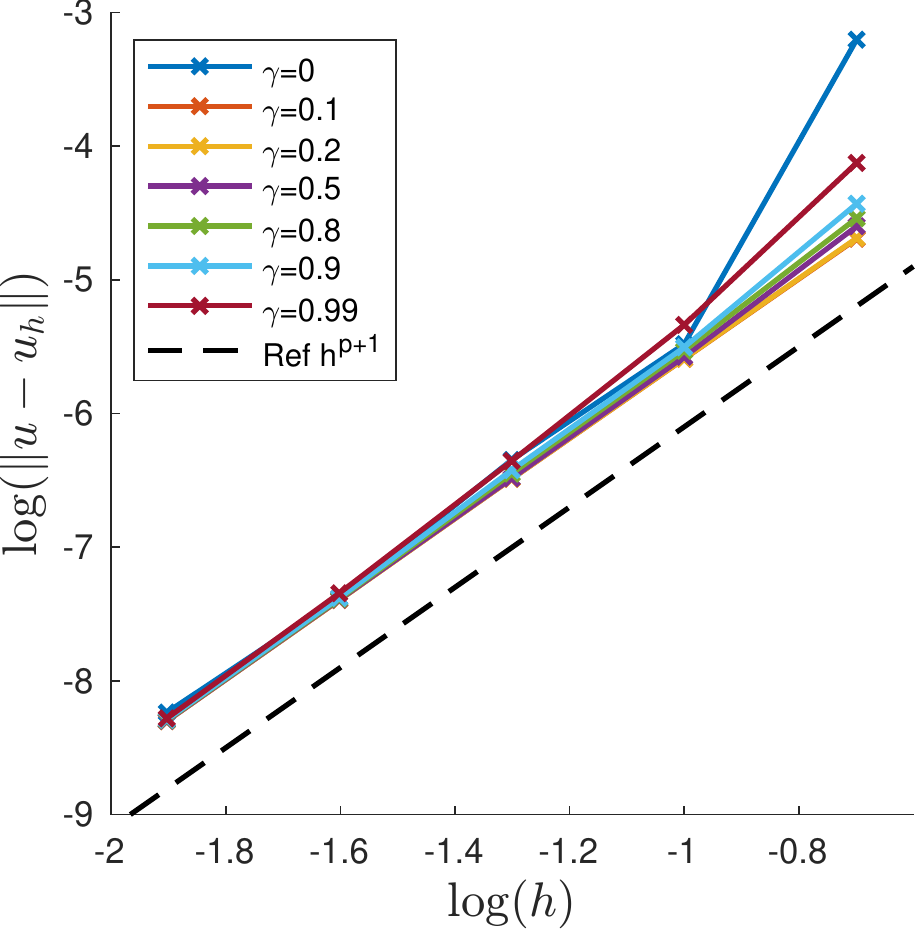}
\subcaption{$p=2$}
\end{subfigure}
\begin{subfigure}[t]{0.35\linewidth}\centering
\includegraphics[width=0.9\linewidth]{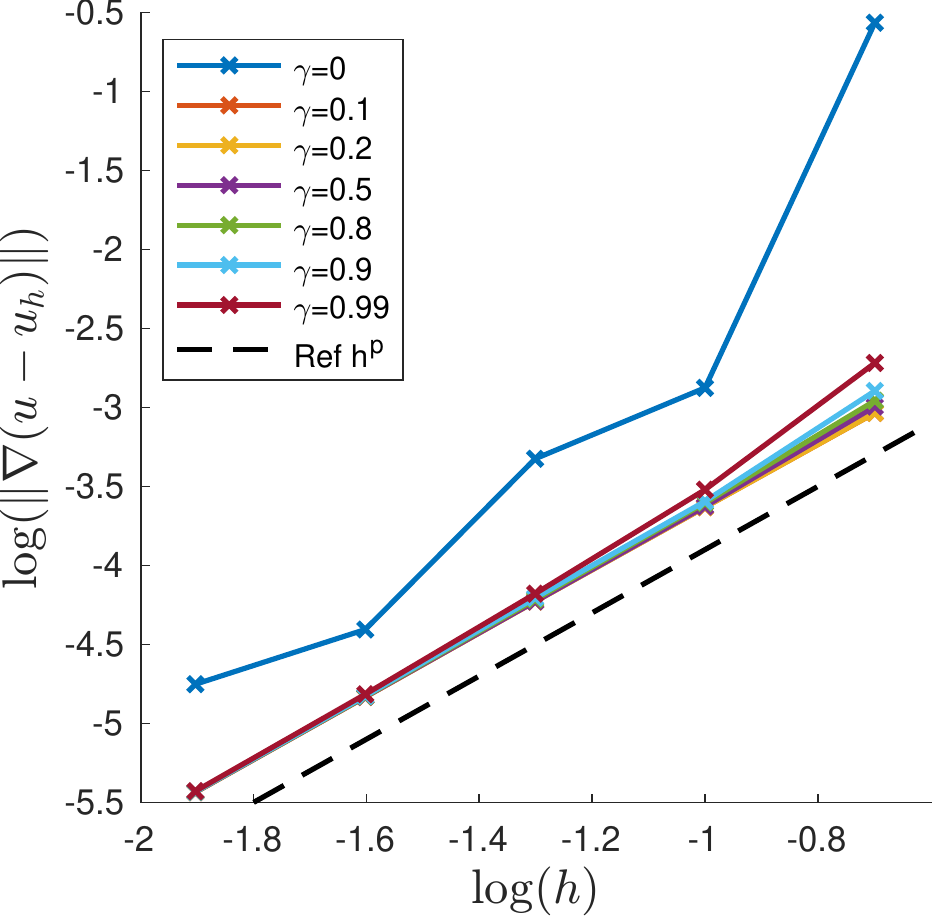}
\subcaption{$p=2$}
\end{subfigure}

\begin{subfigure}[t]{0.35\linewidth}\centering
\includegraphics[width=0.9\linewidth]{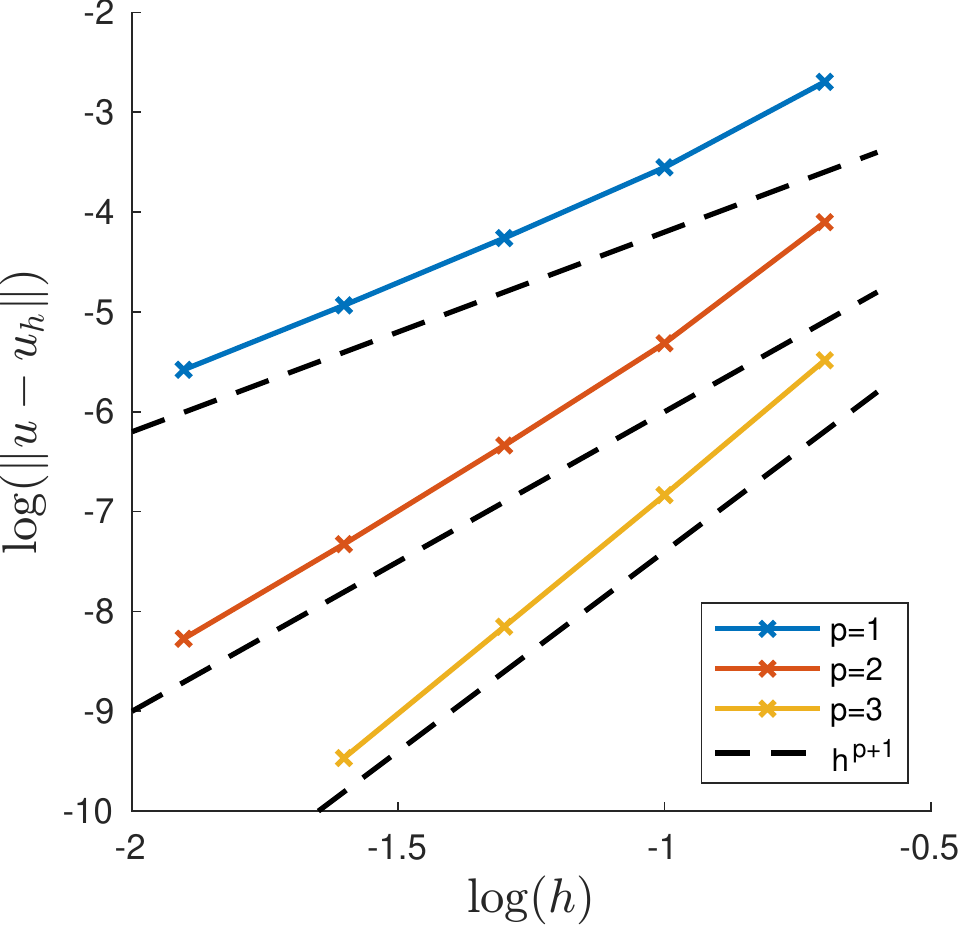}
\subcaption{$\gamma=1$}
\end{subfigure}
\begin{subfigure}[t]{0.35\linewidth}\centering
\includegraphics[width=0.9\linewidth]{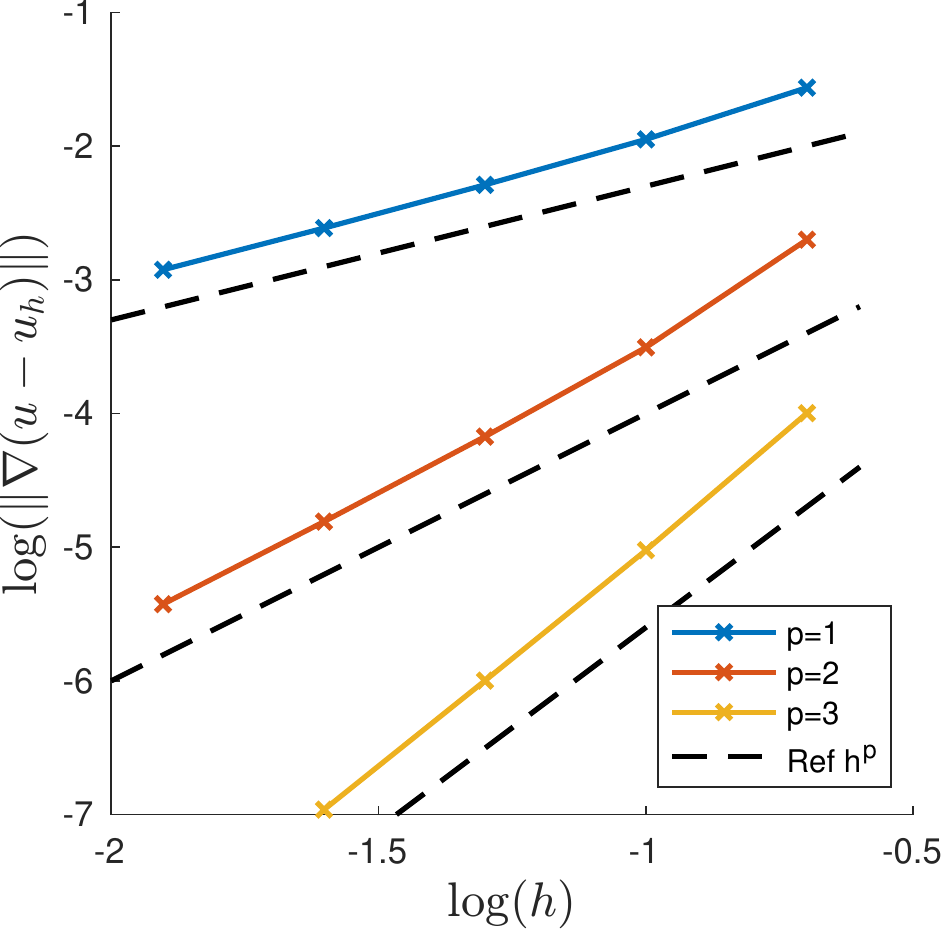}
\subcaption{$\gamma=1$}
\end{subfigure}

\caption{
\emph{Convergence.}
In these experiments, the largest error among 100 different cut situations is presented for each mesh size $h$.
\textbf{(a)--(b)}~Errors in $L^2$-norm respectively in $H^1$-seminorm when using extension for a wide range of $\gamma\in[0,1]$ and full regularity B-spline basis functions of order $p=2$.
\textbf{(c)--(d)}~Errors in $L^2$-norm respectively in $H^1$-seminorm using full regularity B-spline basis functions of orders $p=1,2,3$ and extension with $\gamma=1$.
}
\label{fig:convergence}
\end{figure}

\paragraph{Condition Number Scaling.}
To ensure stable computation of the condition number we convert our sparse matrix $A$ to a full storage matrix and employ dense linear algebra for the computation, i.e., using \verb+cond(full(A))+ in Matlab.
In Figure~\ref{fig:conditionnumber-scaling} we collect studies of how the condition number scales with $h$:
\begin{itemize}
\item
Firstly, in (a)--(b) we consider the condition number $h$-scaling using $p=2$ basis functions and extension for a wide range of $\gamma\in[0,1]$.
We note that without extension ($\gamma=0$) the condition number seemingly can become unbounded. Also, for $\gamma > 0$ the size of $\gamma$ seems to have the effect that for larger values, a smaller mesh size is required before entering the desired asymptotic $h^{-2}$ scaling found in \eqref{eq:condest}. This delay we attribute to the dependence of $\gamma$ in the bound \eqref{eq:dofs-bound}.

\item
Secondly, in (c) we consider condition number $h$-scaling using $p=1,2,3$ basis functions and extension with $\gamma=1$ and note that the desired $h^{-2}$ scaling is plausibly asymptotically obtained in all cases, even though that stage in not quite reached in the case of $p=3$.
\end{itemize}
In (d)--(f) we consider the same studies as in (a)--(c) albeit including simple preconditioning of the stiffness matrix using diagonal scaling.

\begin{figure}\centering
\begin{subfigure}[t]{0.32\linewidth}\centering
\includegraphics[width=0.99\linewidth]{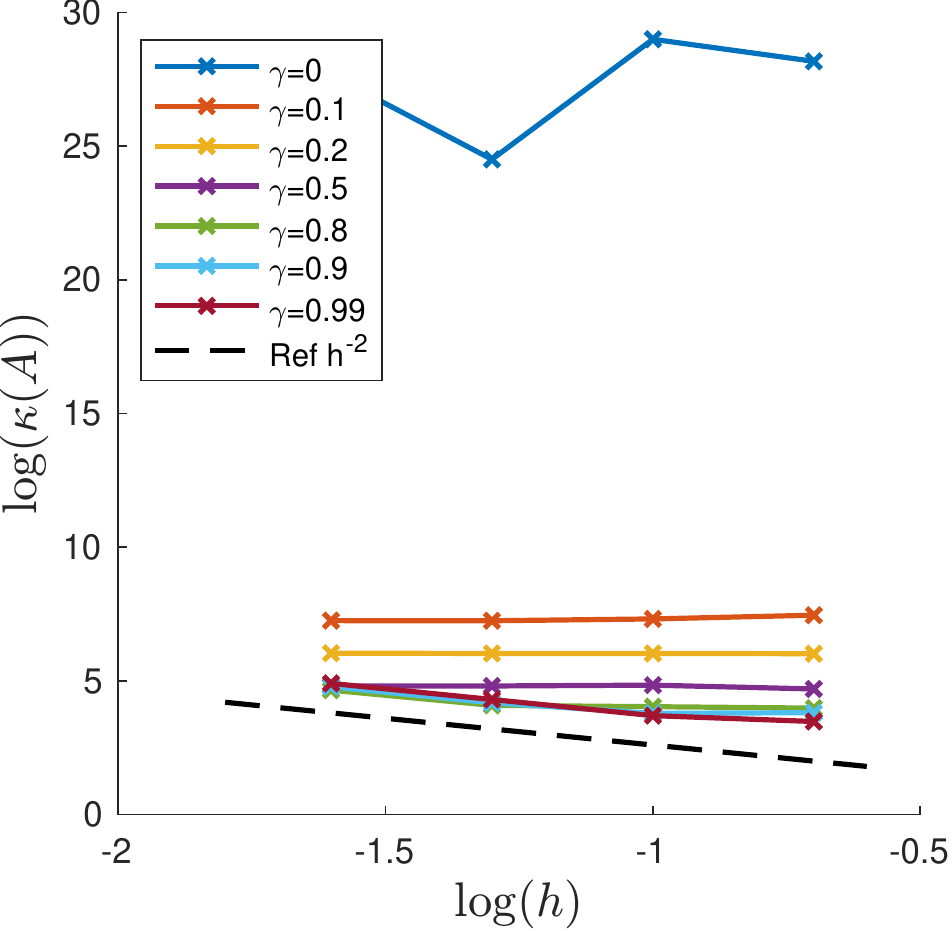}
\subcaption{$p=2$}
\end{subfigure}
\begin{subfigure}[t]{0.32\linewidth}\centering
\includegraphics[width=0.99\linewidth]{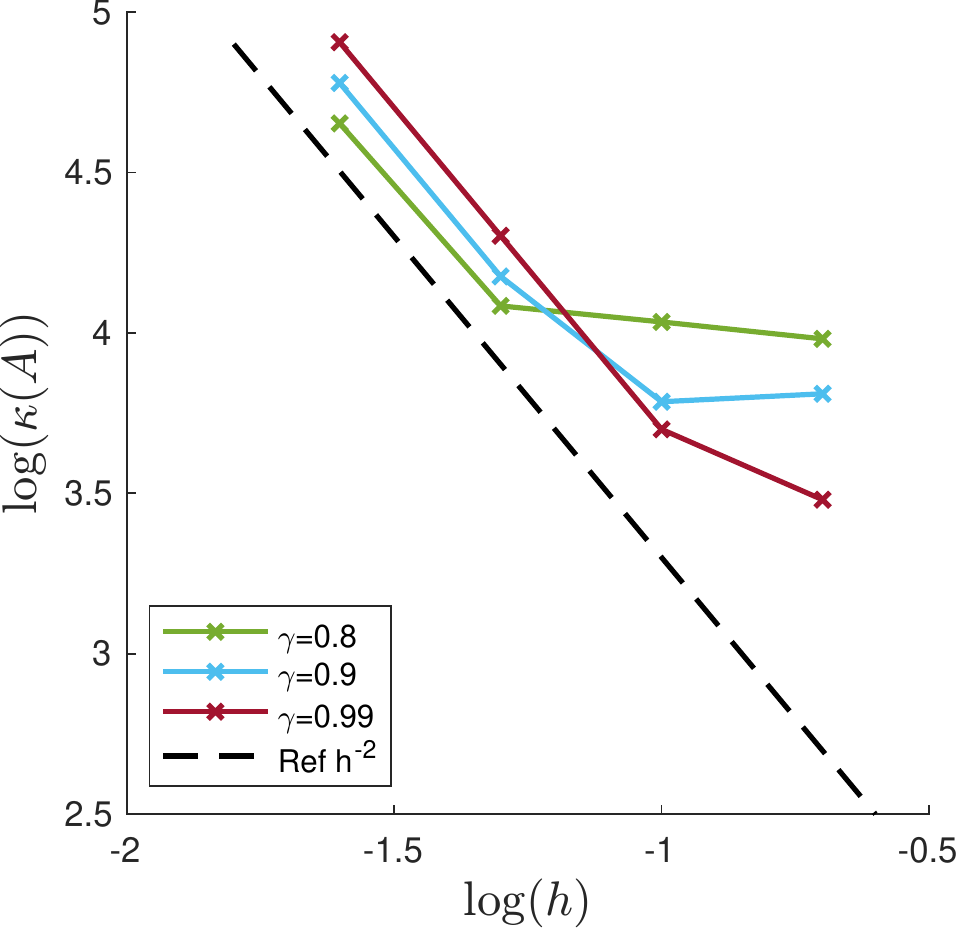}
\subcaption{$p=2$}
\end{subfigure}
\begin{subfigure}[t]{0.32\linewidth}\centering
\includegraphics[width=0.99\linewidth]{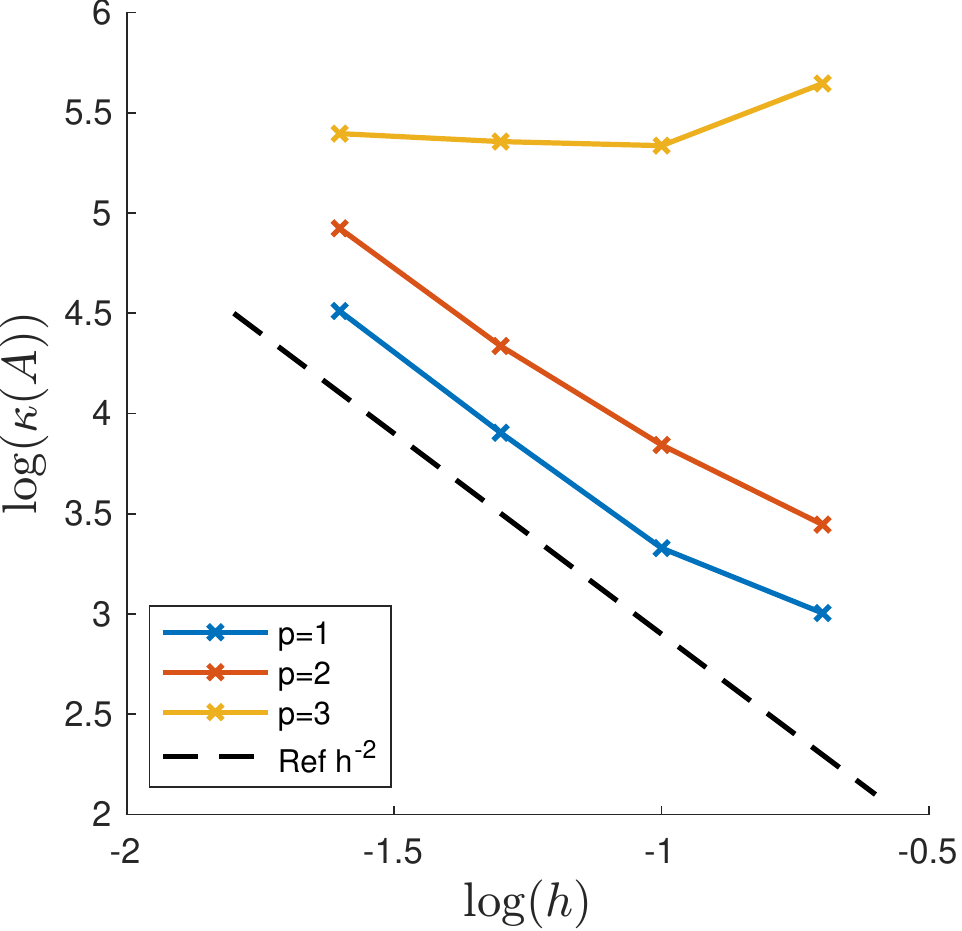}
\subcaption{$\gamma=1$}
\end{subfigure}

\begin{subfigure}[t]{0.32\linewidth}\centering
\includegraphics[width=0.99\linewidth]{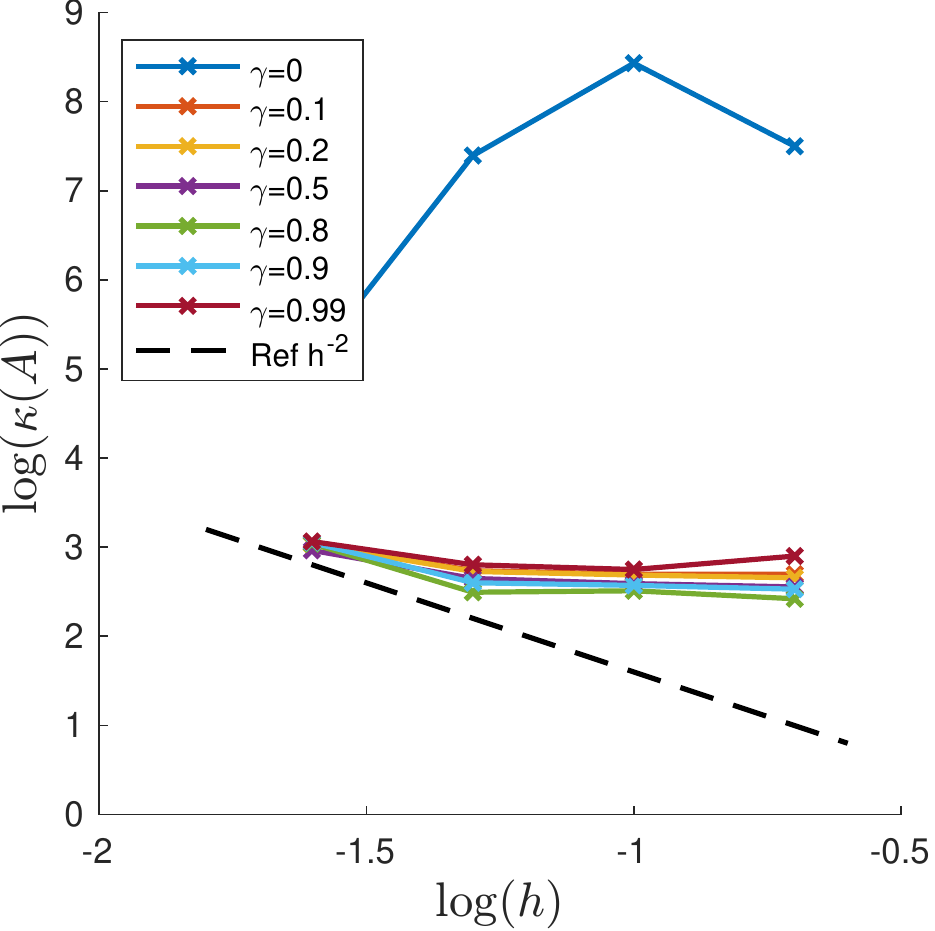}
\subcaption{$p=2$}
\end{subfigure}
\begin{subfigure}[t]{0.32\linewidth}\centering
\includegraphics[width=0.99\linewidth]{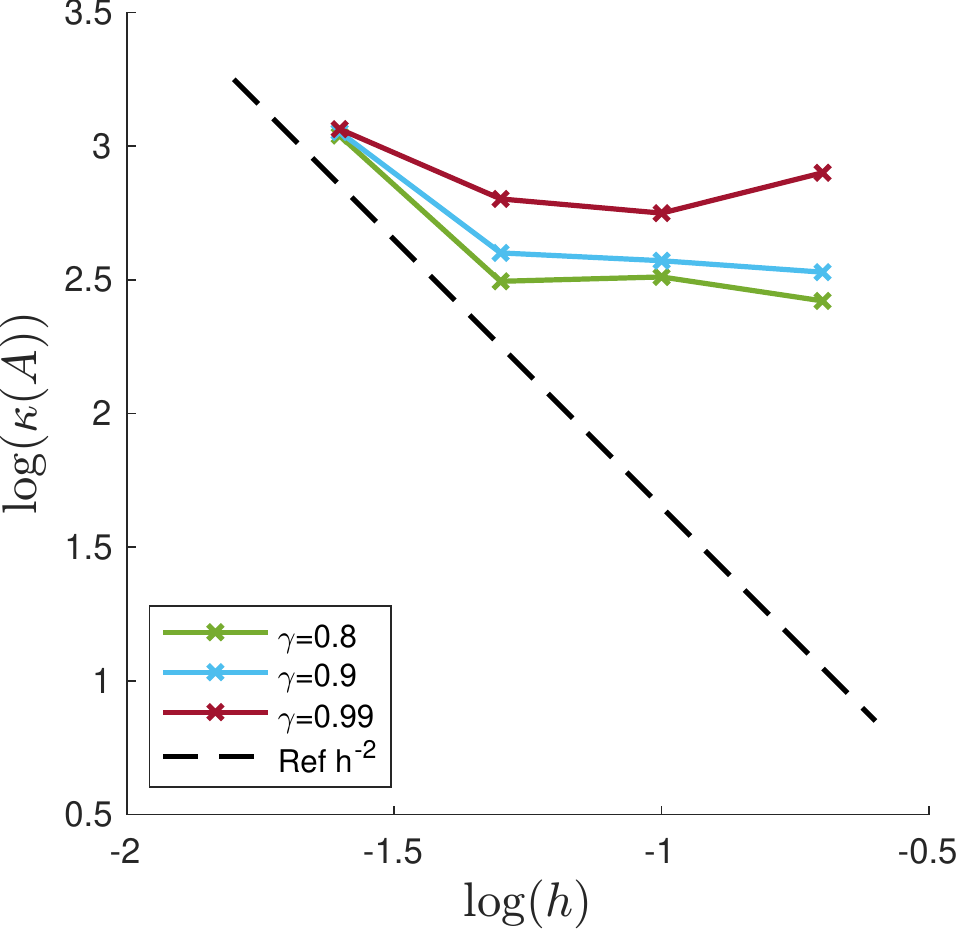}
\subcaption{$p=2$}
\end{subfigure}
\begin{subfigure}[t]{0.32\linewidth}\centering
\includegraphics[width=0.99\linewidth]{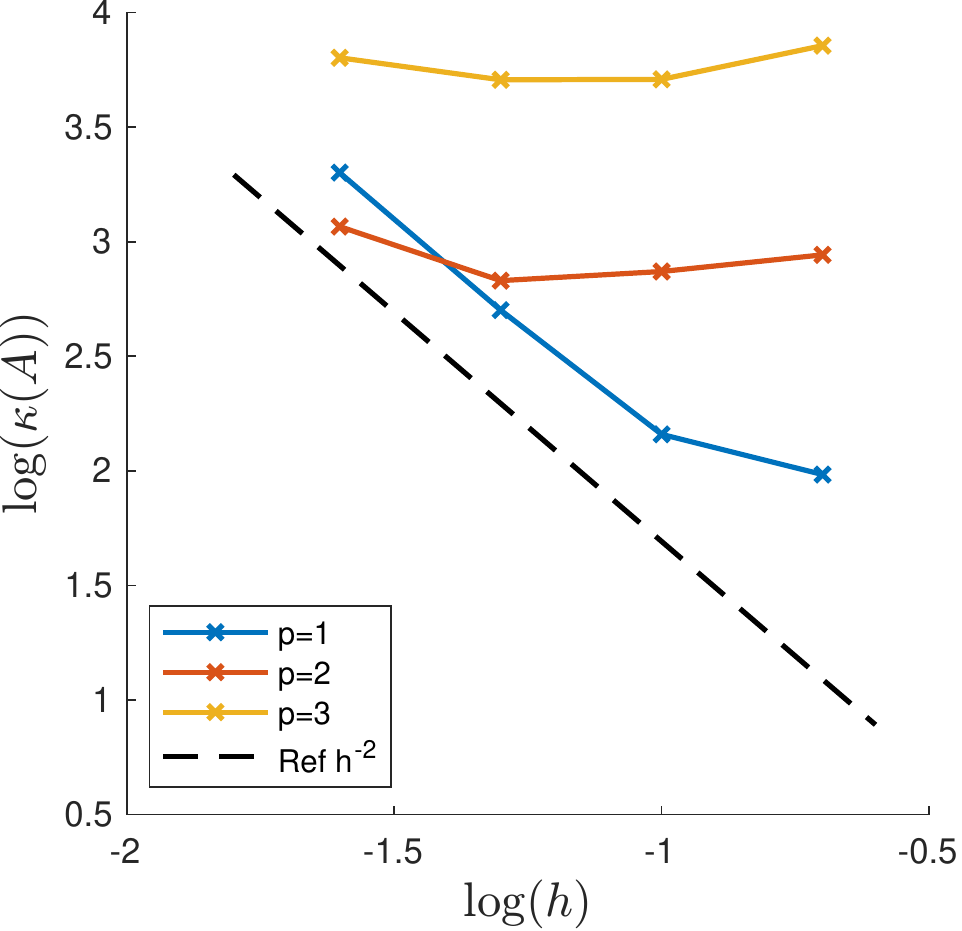}
\subcaption{$\gamma=1$}
\end{subfigure}

\caption{
\emph{Condition number $h$-scaling.} In these experiments the worst stiffness matrix condition number among 100 different cut situations is presented for each mesh size $h$.
\textbf{(a)}~Condition numbers when using extension for a wide range of $\gamma\in[0,1]$ and full regularity quadratic B-spline basis functions.
\textbf{(b)}~Detailed view of the study in (a) limited to the three largest values of $\gamma$.
\textbf{(c)}~Condition numbers using full regularity B-spline basis functions of orders $p=1,2,3$ and extension with $\gamma=1$.
\textbf{(d)--(f)}~Studies corresponding to (a)--(c) when the stiffness matrices are also preconditioned using diagonal scaling.
}
\label{fig:conditionnumber-scaling}
\end{figure}

\paragraph{Example on a Parametric Surface.}
Let $\Gamma\subset\IR^3$ be a surface described via the parametric mapping $\Phi: \Omega \to \Gamma \subset \IR^3$ with induced Riemann metric $G\in\IR^{2\times 2}$, $G_{ij}=\partial_i \Phi \cdot \partial_j \Phi$.
Using the lifting $v^\ell = v \circ \Phi^{-1}$
we can formulate the Dirichlet problem on $\Gamma$ as seeking an unknown on the reference domain $\Omega$; find $u:\Omega \to \IR$ such that
\begin{align}
-\Delta_\Gamma u^\ell &= f \quad  \text{in $\Gamma$}
\,, \qquad
u^\ell = g   \quad  \text{on $\partial\Gamma$}
\end{align}
where $\Delta_\Gamma$ is the Laplace-Beltrami operator on $\Gamma$.
Transforming this problem back to Euclidean coordinates 
in $\Omega$ we can derive a Nitsche's method for the problem defined by the forms
\begin{align} \label{eq:ref-forms-a}
a_h(v,w) 
&= \left( |G|^{1/2} \nabla v, G^{-1} \nabla w \right)_{\Omega}
\\ \nonumber
&\quad - \left( |G|^{1/2} n \cdot G^{-1} \nabla v, w \right)_{\partial \Omega}
+ \left( |G|^{1/2} v, \beta h^{-1} - n \cdot G^{-1} \nabla w \right)_{\partial \Omega}
\\ \label{eq:ref-forms-b}
l_{h}(v) &= \left( |G|^{1/2} f\circ\Phi, v \right)_{\Omega} 
 + \left( |G|^{1/2} g\circ\Phi,\beta {h}^{-1} v - n \cdot G^{-1}\nabla 
v \right)_{\partial \Omega}
\end{align}
Since the mesh $\mcT_h$ and approximation space are naturally defined on $\Omega$, rather than on the curved surface $\Gamma$, we can directly apply the extension to the resulting system of equations. An example solution is presented in Figure~\ref{fig:surf-example}, where a circular reference domain is mapped onto a cone.

\begin{figure}\centering
\begin{subfigure}[t]{0.32\linewidth}\centering
\includegraphics[width=0.8\linewidth]{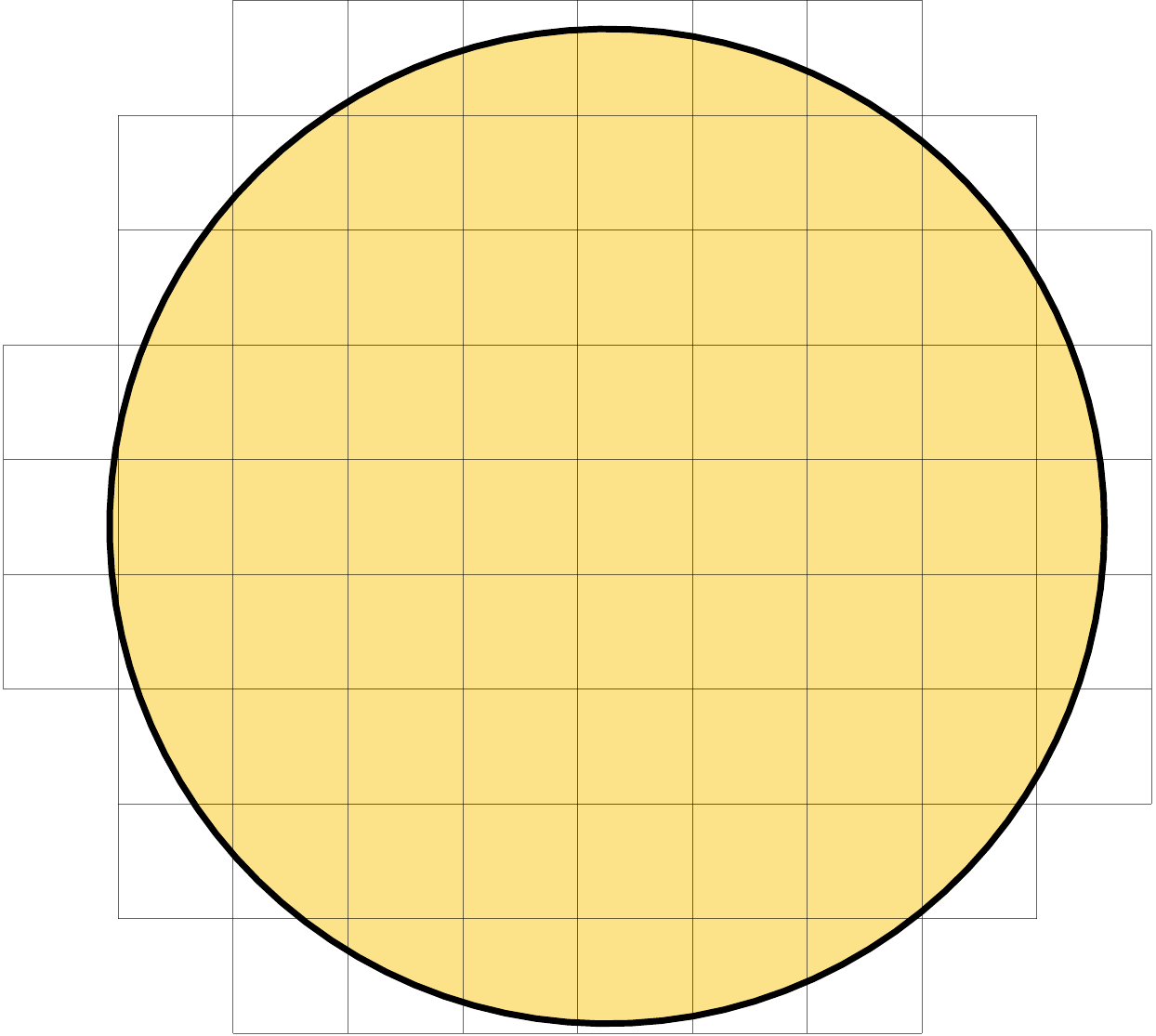}
\subcaption{Reference domain}
\end{subfigure}
\begin{subfigure}[t]{0.36\linewidth}\centering
\includegraphics[width=0.95\linewidth]{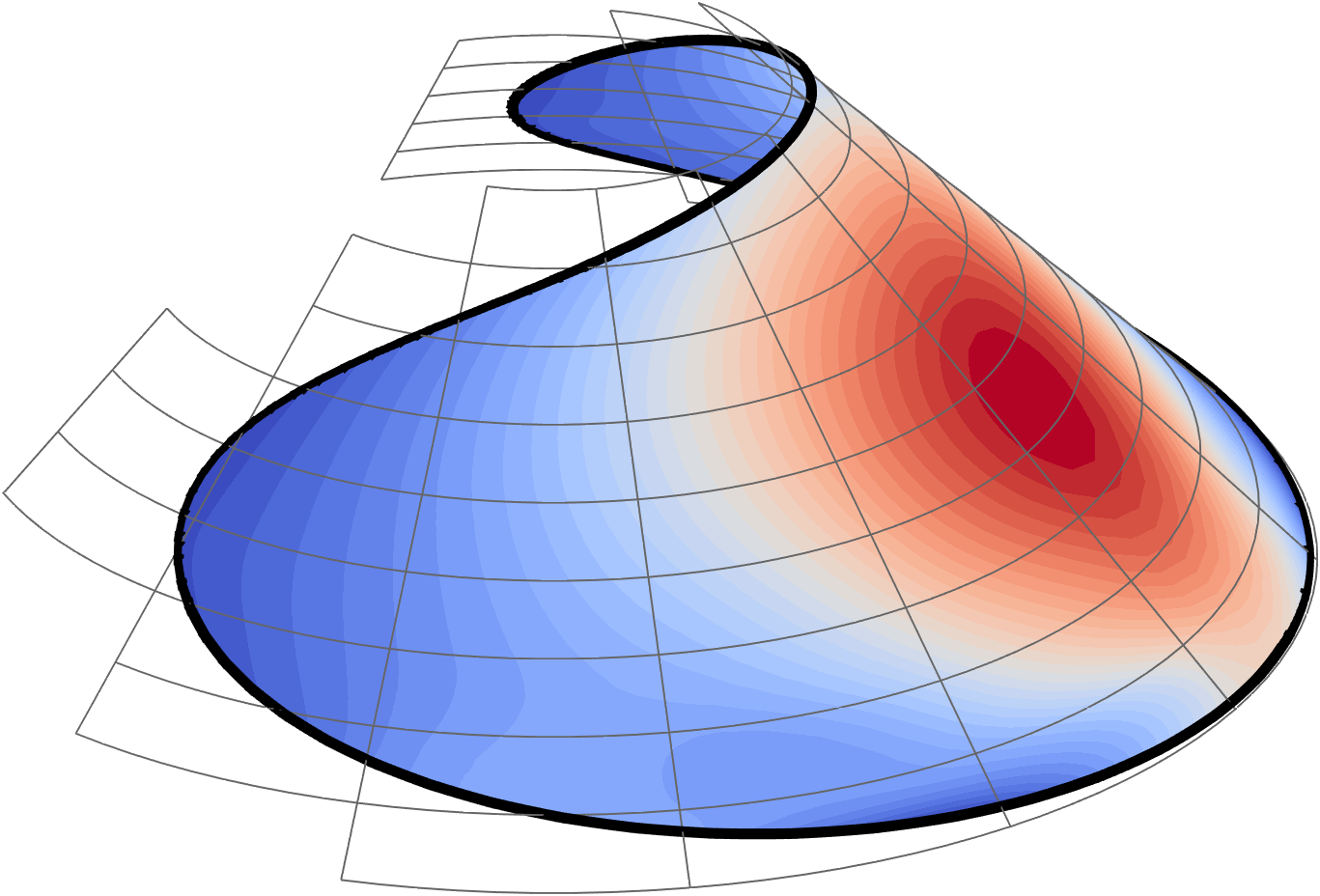}
\subcaption{Solution}
\end{subfigure}

\caption{
\emph{Extension on mapped domain.}
In this example, the domain is a curved surface $\Gamma$ constructed as a map from the unit square onto the surface of a cone, and a circular trim curve in the reference domain $\Omega$.
\textbf{(a)}~Reference domain $\Omega$ and the mesh on which the approximation space is defined.
\textbf{(b)}~Numerical solution for a Dirichlet problem using extension ($\gamma=0.5$).
}
\label{fig:surf-example}
\end{figure}

\section{Conclusions}

We provide a recipe for the construction of a family of extension operators for trimmed spline spaces that feature:
\begin{itemize}
\item \emph{Proven approximation and stability properties.} The extension gives additional stabilization by eliminating the ill-conditioned `small' degrees of freedom, expressing them in terms of well-conditioned `large' degrees of freedom while maintaining optimal order accuracy for the extended space. This is reflected in a good performance in our numerical experiments.

\item \emph{Convenient implementation.} The extension describes a mapping from large degrees of freedom onto all degrees of freedom and is conveniently applied to the linear algebra formulation of the method. Also, since the extension induces the necessary stability, special stabilization terms or other forms of manipulation are not required to ensure robustness with respect to how the spline space is trimmed.

\item \emph{Natural choices in the construction.} The choices needed for the construction of the extension operator are quite natural; the parameter $\gamma \geq 0$  determining the partition into large and small elements that in turn gives the partition into large and small basis functions, the mapping $S_h$ onto large elements, and the weights $w_{T,i}$ in the interpolation operator. During our numerical testing, we found no particular sensitivity in how these choices seem to affect performance.
\end{itemize}

\bibliographystyle{habbrv}
\footnotesize{
\bibliography{ref}
}

\paragraph{Acknowledgements.}
This research was supported in part by the Swedish Research
Council Grants Nos.\  2017-03911, 2018-05262,  2021-04925,  and the Swedish
Research Programme Essence. EB was supported in part by the EPSRC grant EP/P01576X/1.

\bigskip
\bigskip
\noindent
\footnotesize {\bf Authors' addresses:}

\smallskip
\noindent
Erik Burman,  \quad \hfill \addressuclshort\\
{\tt e.burman@ucl.ac.uk}

\smallskip
\noindent
Peter Hansbo,  \quad \hfill \addressjushort\\
{\tt peter.hansbo@ju.se}

\smallskip
\noindent
Mats G. Larson,  \quad \hfill \addressumushort\\
{\tt mats.larson@umu.se}

\smallskip
\noindent
Karl Larsson,  \quad \hfill \addressumushort\\
{\tt karl.larsson@umu.se}

\end{document}